\numberwithin{equation}{section}
\renewcommand{\phi}{\varphi}
\newcommand{\bbR}{\mathbb{R}}
\newcommand{\bbZ}{\mathbb{Z}}
\newcommand{\rmE}{\mathrm{E}}
\newcommand{\rmF}{\mathrm{F}}
\newcommand{\rmH}{\mathrm{H}}
\newcommand{\rmT}{\mathrm{T}}
\newcommand{\rmd}{\mathrm{d}}
\DeclareMathOperator{\id}{id}
\DeclareMathOperator{\pr}{pr}
\DeclareMathOperator{\rank}{rank}
\DeclareMathOperator{\Tor}{Tor}
\DeclareMathOperator{\FR}{FR}
\DeclareMathOperator{\CR}{CR}
\DeclareMathOperator{\HR}{HR}
\newcommand{\ho}{/\!\!/}
\newcommand{\fin}{\mathrm{fin}}
\newcommand{\tr}{\mathrm{tr}}
\newtheorem{theorem}{Theorem}[section]
\newtheorem{theorem*}{Theorem}
\newtheorem{proposition}[theorem]{Proposition}
\newtheorem{corollary}[theorem]{Corollary}
\newtheorem{lemma}[theorem]{Lemma}
\newtheorem{letterthm}{Theorem}
\theoremstyle{definition}
\newtheorem{example}[theorem]{Example}
\newtheorem{remark}[theorem]{Remark}
\title{\bf The homology of permutation racks}
\author{Victoria Lebed \and Markus Szymik}
\date{\mydate\today}
\begin{document}

\maketitle


\renewcommand{\abstractname}{\vspace{-2\baselineskip}}

\begin{abstract}%
\noindent
Despite a blossoming of research activity on racks and their homology for over two decades, with a record of diverse applications to central parts of contemporary mathematics, there are still very few examples of racks whose homology has been fully calculated. 
In this paper, we compute the entire integral homology of all permutation racks. 
Our method of choice involves homotopical algebra, which was brought to bear on the homology of racks only recently. 
For our main result, we establish a spectral sequence, which reduces the problem to one in Borel's equivariant homology, and for which we show that it always degenerates. 
The blueprint given in this paper demonstrates a high potential for further exploitation of these techniques. 
 
\vspace{\baselineskip}
\noindent 
20N02 
(18G40, 
18G50, 
55N91, 
55T99). 

\vspace{\baselineskip}
\noindent 
Permutations, racks, rack homology, equivariant Borel homology, spectral sequences.

\end{abstract}
Racks are fundamental algebraic structures boasting applications to knots~\cite{MR1194995,ElhamNel}, singularities~\cite{Brieskorn}, monodromy~\cite{Yetter}, branched covers~\cite{EVW,Randal-Williams}, Yang--Baxter equations~\cite{LeSurvey,LV_StrGroups,CJO_Minimality}, Hopf algebras~\cite{AG}, and the integration of Leibniz algebras~\cite{Kinyon}, to name just a few. Just as group homology is an important invariant of groups, racks come with rack homology~\cite{FRS}, and these rack homology groups are what the applications most often require. So far, complete computations of homological invariants of racks were limited to a few isolated cases~\cite{Clauwens:cohomology, Nosaka, FGG}. In this paper, we pursue the approach to rack homology via Quillen's homotopical algebra, initiated in~\cite{Szymik:Quillen}. It allows us to compute the entire integral homology for a whole family of racks: the permutation racks. 

Recall that a {\it rack}~$(X,\rhd)$ is a set~$X$ together with a binary operation~$\rhd$ such that the left multiplications~\hbox{$y\mapsto x\rhd y$} are automorphisms for all~$x$ in~$X$.~(Automorphisms are, as usual in algebra, bijective morphisms, and the left multiplications are morphisms if and only the self-distributivity equation~\hbox{$x\rhd(y\rhd z)=(x\rhd y)\rhd(x\rhd z)$} holds for all~\hbox{$x,y,z$} in~$X$.) Basic examples are the {\it permutation racks}~$(X,\phi)$, where these automorphisms are independent of the left factor: we have~\hbox{$x\rhd y=\phi(y)$} for a permutation~$\phi$ on the set~$X$. We note that permutation racks~$(X,\phi)$ are, essentially, sets~$X$ with an action of the infinite cyclic group~$\bbZ$, or~$\bbZ$--sets for short: the integer~$n$ acts as~$\phi^n$. This basic observation will play a crucial part in the paper. For instance, it suggests refining the set~$X/\phi$ of orbits to the {\it homotopy orbit space}~$X\ho\phi$ of the action, also known as the~{\it Borel construction}. For the infinite cyclic group~$\bbZ$, the resulting space is nothing but the mapping torus of the generator~$\phi$, and computations become accessible. Our main result reads as follows:

\begin{letterthm}\label{T:A}
For every permutation rack~$(X,\phi)$, there is a spectral sequence of homological type whose~$\rmE^2$ page is given by
\[
\rmE_{\bullet,q}^2\cong\overline{\rmH}_\bullet(X\ho\phi)^{\otimes(q-1)}\otimes\rmH_\bullet(X\ho\phi),
\]
and which abuts to the rack homology~$\HR_\bullet(X,\phi)$ of~$(X,\phi)$. This spectral sequence always degenerates from its~$\rmE^2$ page on.
\end{letterthm}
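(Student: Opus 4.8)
The plan is to realise $\HR_\bullet(X,\phi)$ as the homology of a space built functorially out of the homotopy quotient $X\ho\phi$, and then to read off both the spectral sequence and its degeneration from a word-length filtration on that space. The first step is to unwind the homotopical description of rack homology from \cite{Szymik:Quillen}. A permutation rack is exactly a $\bbZ$-set, with $\phi$ the generator, and I would show that the derived indecomposables of $(X,\phi)$ can be assembled from the action groupoid $X\ho\phi$: concretely, $\HR_\bullet(X,\phi)$ is identified with the homology of a free associative (James-type) construction on $X\ho\phi$ — a one-sided bar construction of the shape $B(\ast,M(X\ho\phi),X\ho\phi)$, equivalently the free-monoid construction acting on $X\ho\phi$ — whose part of length $q$ is, up to homotopy, a product of $q-1$ "reduced" copies of $X\ho\phi$ together with one unreduced copy. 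This is the step that reduces the problem to equivariant homology: everything is now expressed through $\rmH_\bullet(X\ho\phi)$, which decomposes as a sum over the orbits of $\phi$ and is concentrated in degrees $0$ and $1$ — free on the orbit set in degree $0$, and free on the finite orbits in degree $1$, the stabiliser of a finite orbit being infinite cyclic. In particular $\rmH_\bullet(X\ho\phi)$ is free abelian, so no torsion enters at this stage.

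Next I would install the word-length (James) filtration $F_0\subseteq F_1\subseteq\dots$ on this construction, with $F_q$ the part of length at most $q$. The subquotients $F_q/F_{q-1}$ are smash/Cartesian products of copies of $X\ho\phi$, so the induced homology spectral sequence has
\[
\rmE^1_{\bullet,q}\cong\overline{\rmH}_\bullet\bigl(F_q/F_{q-1}\bigr).
\]
Since $\rmH_\bullet(X\ho\phi)$ is free abelian, the Künneth theorem has no $\Tor$ terms and this identifies the page, on the nose, with $\overline{\rmH}_\bullet(X\ho\phi)^{\otimes(q-1)}\otimes\rmH_\bullet(X\ho\phi)$; after the evident reindexing this is the asserted $\rmE^2$ page, and here one may take $\rmE^1=\rmE^2$, the bar differential already being built in. The abutment to $\HR_\bullet(X,\phi)$ is immediate from the previous step.

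It remains to prove degeneration, which is the heart of the matter. The free-monoid structure makes the filtration multiplicative, so the differentials $d^r$ are derivations for the induced products $\rmE^r_{\bullet,a}\otimes\rmE^r_{\bullet,b}\to\rmE^r_{\bullet,a+b}$. Under this product $\rmE^2$ is generated by its lowest non-trivial filtration part $\rmE^2_{\bullet,1}\cong\rmH_\bullet(X\ho\phi)$: the map $\rmE^2_{\bullet,1}{}^{\otimes q}\to\rmE^2_{\bullet,q}$ is the canonical projection $\rmH_\bullet^{\otimes q}\to\overline{\rmH}_\bullet^{\otimes(q-1)}\otimes\rmH_\bullet$, hence surjective — this is exactly where the precise form $\overline{\rmH}^{\otimes(q-1)}\otimes\rmH$ of the page, rather than $\rmH^{\otimes q}$, is used. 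But $F_0$ is a point, so $\rmE^r_{\bullet,\le 0}$ vanishes in positive total degree, and every $d^r$ strictly lowers the filtration index while raising the internal degree; hence no differential can leave $\rmE^r_{\bullet,1}$, and all of $\rmE^2_{\bullet,1}$ consists of permanent cycles. By induction on $r$: assuming $\rmE^r=\rmE^2$, the derivation $d^r$ kills the generators $\rmE^2_{\bullet,1}$, hence kills the whole algebra they generate, so $d^r=0$ and $\rmE^{r+1}=\rmE^2$. The spectral sequence therefore degenerates from $\rmE^2$ on. (Alternatively, the stable splitting of the James construction, $\Sigma F\simeq\bigvee_q\Sigma(F_q/F_{q-1})$, splits the filtration on homology directly and gives the same conclusion.)

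The main obstacle is the first step: extracting from the general Quillen-homological machinery the precise statement that the rack homology of a permutation rack is the homology of this James-type construction on $X\ho\phi$ — that is, making the passage from rack homology to the equivariant homology of $X\ho\phi$, assembled by a free associative construction, both rigorous and natural in $(X,\phi)$. Once that model is secured, the filtration, the identification of the $\rmE^2$ page (clean precisely because $\rmH_\bullet(X\ho\phi)$ is free and in degrees $\le 1$), and the multiplicativity/derivation argument for degeneration are comparatively routine; the only point demanding care there is verifying that the multiplication on $\rmE^2$ really does produce the higher-$q$ terms as products of classes in $\rmE^2_{\bullet,1}$.
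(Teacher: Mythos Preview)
Your proposal hinges entirely on the first step: realising $\HR_\bullet(X,\phi)$ as the homology of a James-type free-monoid construction on $X\ho\phi$, with a multiplicative word-length filtration whose associated graded in level~$q$ has homology $\overline{\rmH}_\bullet(X\ho\phi)^{\otimes(q-1)}\otimes\rmH_\bullet(X\ho\phi)$. You acknowledge this as the ``main obstacle,'' but you do not prove it, and it does not follow from anything in~\cite{Szymik:Quillen} without substantial additional work. The shape of the $\rmE^2$ page---with one \emph{un}reduced factor---already tells you that no off-the-shelf James construction will do, and your bar-type notation $B(\ast,M(X\ho\phi),X\ho\phi)$ is a placeholder, not a construction. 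As a sanity check: for a free permutation rack the homotopy orbit space is discrete, so any naive James construction on it is discrete too and cannot see $\HR_n$ for $n\geqslant 2$; whatever you build must carry the grading by word length as an honest homological grading, and you have not said how. Until this model is made precise and shown to compute rack homology, neither your spectral sequence nor your degeneration argument starts.

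The degeneration argument is also incomplete on its own terms. Even granting a multiplicative filtration with derivation differentials, you need the multiplication on $\rmE^2$ to make $\rmE^2_{\bullet,1}$ generate; you assert that the product $\rmH_\bullet^{\otimes q}\to\overline{\rmH}_\bullet^{\otimes(q-1)}\otimes\rmH_\bullet$ is ``the canonical projection,'' but that is a property of the hypothetical model, not a fact you have checked. The paper proceeds quite differently. For the construction, it resolves $(X,\phi)$ simplicially by \emph{free} permutation racks, forms the double complex $\CR_q(F_p)$, and runs the two associated spectral sequences: one collapses to $\HR_\bullet(X,\phi)$, the other has $\rmE^1_{p,q}=\HR_q(F_p)$, which the prior computation for free permutation racks (Theorem~\ref{thm:free}) and K\"unneth turn into the stated $\rmE^2$ page. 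For degeneration, the paper does not invoke any multiplicative structure at all---indeed the rack differential is explicitly \emph{not} a derivation on the tensor algebra---but instead reduces to finitely many orbits and rational coefficients, collapses finite orbits to fixed points, and then matches the Poincar\'e series of the $\rmE^2$ page against an explicit supply of linearly independent cycles built from Propositions~\ref{prop:morphism} and~\ref{prop:fixed}, detected via the map to the trivial quotient rack. Your multiplicativity idea, if it could be made rigorous, would give a cleaner and more conceptual degeneration proof than the paper's counting argument; but as written, the load-bearing model is missing.
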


The spectral sequence in Theorem~\ref{T:A} is constructed in Theorem~\ref{thm:ss} of the main text, and the degeneracy is proven in Theorem~\ref{thm:degeneracy}. As for the description of the~$\rmE^2$ page, the spaces~$X\ho\phi$ are the homotopy orbit spaces of the permutation~$\phi$ acting on~$X$. We review their definition in Section~\ref{sec:Borel}. They decompose, up to homotopy, into a disjoint union of contractible lines, one for each infinite orbit of~$\phi$, and circles, one for each finite orbit of~$\phi$. In particular, we know their homology, the~$\bbZ$--equivariant Borel homology of~$(X,\phi)$, see~Proposition~\ref{prop:Borel_homology}. This extra information makes the spectral sequence efficient. The rest of this introduction spells out the computational consequences of Theorem~\ref{T:A}.

For{\it~free} permutation racks, where all orbits are infinite, there is no difference between the homotopy orbit space~$X\ho\phi$ and the usual orbit space~$X/\phi$, which is discrete. This implies that, in this particular case, the spectral sequence has only one column and degeneracy is obvious.

\begin{letterthm}
Let~$(X,\phi)$ be a free permutation rack with the set of orbits~$S=X/\phi$. There are natural isomorphisms
\[
\HR_n(X,\phi)\;\cong\;\overline{\bbZ S}^{\otimes(n-1)}\otimes\bbZ S,
\]
where~$\bbZ S$ is the free abelian group on the set~$S$, and~$\overline{\bbZ S}$ is the free abelian subgroup of linear combinations whose coefficients add up to zero. In particular, the homology is a free abelian group in each degree. If the number~$|S|=r$ of orbits is finite, then
\[
r(r-1)^{n-1}=(r-1)^n+(r-1)^{n-1}
\]
is the~$n$--th Betti number of the free permutation rack.
\end{letterthm}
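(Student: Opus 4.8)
The plan is to read this statement off Theorem~\ref{T:A}, the simplification being that the homotopy orbit space of a free action is the strict one. First I would record that for a \emph{free} permutation rack the set $X$ splits $\bbZ$-equivariantly as $\bigsqcup_{s\in S}O_s$, with each orbit $O_s$ a copy of $\bbZ$ on which $\phi$ acts by translation; so $\phi$ acts freely on $X$ and the projection $X\ho\phi\to X/\phi=S$ is a weak equivalence onto the discrete set $S$, as already noted before the statement and as captured by Proposition~\ref{prop:Borel_homology}. Consequently $\rmH_0(X\ho\phi)\cong\bbZ S$ and $\overline{\rmH}_0(X\ho\phi)\cong\overline{\bbZ S}=\ker(\bbZ S\to\bbZ)$, with all higher homology, reduced or not, vanishing.

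Second, I would insert this into the $\rmE^2$ page of Theorem~\ref{T:A}. As every homology group involved sits in degree $0$, so does every tensor power of it, and the page is concentrated in the single column
\[
\rmE^2_{0,q}\cong\overline{\bbZ S}^{\otimes(q-1)}\otimes\bbZ S\qquad(q\geq1),
\]
so it degenerates for the trivial reason that there is no room for a differential --- a special case of the degeneracy in Theorem~\ref{T:A} --- and the filtration of the abutment has a unique nonzero subquotient in each total degree. This gives $\HR_n(X,\phi)\cong\rmE^\infty_{0,n}\cong\overline{\bbZ S}^{\otimes(n-1)}\otimes\bbZ S$. Naturality in $(X,\phi)$ I would get from the functoriality of the spectral sequence of Theorem~\ref{thm:ss}, a morphism of free permutation racks inducing the evident map of orbit sets $S\to S'$ and hence compatible maps on $\bbZ S$ and $\overline{\bbZ S}$.

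Third, I would extract the additive statements. The augmentation sequence $0\to\overline{\bbZ S}\to\bbZ S\to\bbZ\to 0$ splits, so $\overline{\bbZ S}$ is a direct summand of the free abelian group $\bbZ S$, hence free abelian, of rank $|S|-1$ when $S$ is finite, with basis $\{\,s-s_0\,\}_{s\neq s_0}$ for any chosen $s_0\in S$. Since a tensor product over $\bbZ$ of free abelian groups is free abelian, with rank the product of the ranks, $\HR_n(X,\phi)$ is free abelian of rank $r(r-1)^{n-1}$ when $|S|=r$. Splitting one tensor factor via $\bbZ S\cong\overline{\bbZ S}\oplus\bbZ$ rewrites it as $\overline{\bbZ S}^{\otimes n}\oplus\overline{\bbZ S}^{\otimes(n-1)}$, which both explains the form $(r-1)^n+(r-1)^{n-1}$ of the $n$-th Betti number and makes its equality with $r(r-1)^{n-1}$ transparent.

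I do not anticipate a real obstacle: given Theorem~\ref{T:A} this is bookkeeping. The only delicate points are the identification $X\ho\phi\simeq S$ for free permutation racks and fixing the grading convention so that $\rmE^\infty_{0,n}$ is indeed the piece of $\HR_\bullet(X,\phi)$ in degree $n$; with those in place, the single-column collapse and the rank count are routine.
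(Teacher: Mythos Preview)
Your argument is logically valid as a deduction from Theorem~\ref{T:A}, but in the context of this paper it is circular. The construction of the spectral sequence in Theorem~\ref{thm:ss} (which is the first half of Theorem~\ref{T:A}) computes the $\rmE^1$ page by applying the rack homology functor to a simplicial resolution by \emph{free} permutation racks $F_p$, and identifies $\rmE^1_{p,q}=\HR_q(F_p)$ with $\overline{\bbZ S_p}^{\otimes(q-1)}\otimes\bbZ S_p$ by invoking Theorem~\ref{thm:free}, which is precisely the statement you are trying to prove. The paper flags this explicitly after stating the theorem (``we use this result in the proof of Theorem~\ref{T:A}, and therefore provide an independent proof beforehand'') and again in Example~\ref{ex:free_spec_seq}, where the single-column collapse you describe is noted to be ``only a consistency check.''

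The paper's actual proof is a direct chain-level computation. Surjectivity of the multiplication map $\overline{\bbZ S}^{\otimes(n-1)}\otimes\bbZ S\to\HR_n(X,\phi)$ is obtained from Lemma~\ref{lem:epi}, which in turn rests on the combinatorial Lemma~\ref{lem:vanishing_criterion}: using the differential formula~\eqref{eq:diff}, any cycle can be pushed modulo boundaries into $\overline{\bbZ S}\otimes\CR_{n-1}(X,\phi)$, and one iterates. Injectivity is detected by composing with the orbit map~\eqref{eq:test} to $(\bbZ S)^{\otimes n}$, where the composite is simply the inclusion. This is the independent input that makes the spectral sequence of Theorem~\ref{T:A} computable in the first place; your write-up would need to supply something of this sort rather than appeal to Theorem~\ref{T:A}.
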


Actually, we use this result in the proof of Theorem~\ref{T:A}, and therefore provide an independent proof beforehand, as Theorem~\ref{thm:free}. 

In general, the spectral sequence is concentrated in the region of the first quadrant where~\hbox{$p\leqslant q$}. This easily implies the vanishing of the differentials in low dimensions and gives:

\begin{letterthm}\label{thm:C}
For any permuation rack~$(X,\phi)$ we have
\begin{align*}
\HR_0(X,\phi)&\cong\bbZ,\\
\HR_1(X,\phi)&\;\cong\;\bbZ S\;\cong\;\bbZ^r,\\
\HR_2(X,\phi)&\;\cong\; (\overline{\bbZ S}\otimes\bbZ S)\oplus\bbZ S_\fin\;\cong\;\bbZ^{r(r-1)+r_{\fin}},
\end{align*}
where~$S_\fin\subseteq S$ is the subset of finite orbits, and~$|S_\fin|=r_\fin$ denotes its size.
\end{letterthm}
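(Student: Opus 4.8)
The plan is to read the three claimed isomorphisms off the spectral sequence of Theorem~\ref{T:A}, using Proposition~\ref{prop:Borel_homology} to make its~$\rmE^2$ page explicit in total degrees~$0$, $1$ and~$2$. That proposition gives $\rmH_0(X\ho\phi)\cong\bbZ S$ and $\rmH_1(X\ho\phi)\cong\bbZ S_\fin$ with $\rmH_q(X\ho\phi)=0$ for $q\geqslant2$, hence $\overline{\rmH}_0(X\ho\phi)\cong\overline{\bbZ S}$, $\overline{\rmH}_1(X\ho\phi)\cong\bbZ S_\fin$ and $\overline{\rmH}_q(X\ho\phi)=0$ for $q\geqslant2$. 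Since a tensor product of~$q$ graded abelian groups each concentrated in degrees~$0$ and~$1$ is concentrated in degrees~$0,\dots,q$, the formula $\rmE^2_{\bullet,q}\cong\overline{\rmH}_\bullet(X\ho\phi)^{\otimes(q-1)}\otimes\rmH_\bullet(X\ho\phi)$ confirms the announced concentration in $0\leqslant p\leqslant q$ and, in total degree at most~$2$, yields
\[
\rmE^2_{0,1}\cong\bbZ S,\qquad \rmE^2_{1,1}\cong\bbZ S_\fin,\qquad \rmE^2_{0,2}\cong\overline{\bbZ S}\otimes\bbZ S,
\]
while $\rmE^2_{1,0}=\rmE^2_{2,0}=0$ (these bidegrees violate $p\leqslant q$) and $\rmE^2_{0,0}\cong\bbZ$ is the only entry of total degree~$0$ (so that $\HR_0(X,\phi)\cong\bbZ$, as for every rack).

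Next I would check that none of these entries supports or receives a differential. On each page $\rmE^r_{p,q}$ is a subquotient of $\rmE^2_{p,q}$, hence still zero unless $0\leqslant p\leqslant q$; and a differential $d^r\colon\rmE^r_{p,q}\to\rmE^r_{p-r,q+r-1}$ with $r\geqslant2$ is nonzero only if $p\geqslant r$, whence $q\geqslant p\geqslant r$ and the source has total degree $p+q\geqslant2r\geqslant4$. Thus no entry of total degree at most~$2$ is the source of a nonzero differential, and none is the target of one either, since that would require a source of total degree~$3<4$. (Alternatively, one may invoke the degeneracy from~$\rmE^2$ on, i.e.\ Theorem~\ref{thm:degeneracy}.) Hence $\rmE^\infty_{p,q}\cong\rmE^2_{p,q}$ for $p+q\leqslant2$, and the convergent filtration on $\HR_n(X,\phi)$ has associated graded $\bigoplus_{p+q=n}\rmE^2_{p,q}$ for $n\leqslant2$.

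It then remains to resolve that filtration. For $n=0$ we get $\HR_0(X,\phi)\cong\rmE^2_{0,0}\cong\bbZ$. For $n=1$ the filtration is the short exact sequence $0\to\rmE^\infty_{0,1}\to\HR_1(X,\phi)\to\rmE^\infty_{1,0}\to0$ with $\rmE^\infty_{1,0}=0$, so $\HR_1(X,\phi)\cong\bbZ S$, which is free of rank~$r$ when $|S|=r$ is finite. For $n=2$ the filtration reads $0\to\overline{\bbZ S}\otimes\bbZ S\to\HR_2(X,\phi)\to\bbZ S_\fin\to0$ (the top graded piece $\rmE^\infty_{2,0}$ being~$0$); this sequence splits because $\bbZ S_\fin$ is free abelian, so $\HR_2(X,\phi)\cong(\overline{\bbZ S}\otimes\bbZ S)\oplus\bbZ S_\fin$. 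For $|S|=r$ finite we have $\overline{\bbZ S}\cong\bbZ^{r-1}$ and $\bbZ S_\fin\cong\bbZ^{r_\fin}$, so the Betti numbers in degrees~$0$, $1$, $2$ are $1$, $r$ and $r(r-1)+r_\fin$, as claimed.

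I do not expect a genuine obstacle here: the mathematical content sits entirely in Theorem~\ref{T:A} and Proposition~\ref{prop:Borel_homology}, and the argument above is formal bookkeeping with the convergent filtration. The two points that need a word of care are the vanishing of the potential differentials in and out of total degree~$\leqslant2$ — which follows from the support condition $p\leqslant q$, or directly from the degeneracy — and the single extension problem in degree~$2$, which is immediate from the freeness of the quotient term $\bbZ S_\fin$.
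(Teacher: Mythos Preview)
Your proposal is correct and follows essentially the same route as the paper: both read the low-degree isomorphisms off the spectral sequence of Theorem~\ref{T:A}, using Proposition~\ref{prop:Borel_homology} to see that $\rmE^2_{p,q}=0$ for $p>q$, whence the differentials in total degree~$\leqslant 2$ vanish and the extension problems dissolve by freeness. Your write-up is simply more explicit than the paper's about the bound $p+q\geqslant 2r\geqslant 4$ for the source of a nonzero $d^r$ and about the filtration in degree~$2$.
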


These formulas for~$\HR_0$ and~$\HR_1$ agree with the known formulas for general racks. However, our computations for~$\HR_2$, especially important for applications, are new.

While our spectral sequence is built upon our direct computation for the free permutation racks, we can use it to compute the entire integral homology for the opposite extreme as well: for permutation racks that do not contain any free orbit, such as all~\textit{finite} permutation racks.

\parbox{\linewidth}{%
\begin{letterthm}\label{thm:D}
Let~$(X,\phi)$ be a finite permutation rack or, more generally, a permutation rack such that all its orbits are finite. Then
\[
\HR_n(X,\phi)\;\cong\; (\bbZ S)^{\otimes n}
\]
is a free abelian group on the set of~$n$--tuples of orbits, where~$S=X/\phi$ as before. In particular, there is no torsion in the homology.
\end{letterthm}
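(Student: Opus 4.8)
Since every orbit of $\phi$ is finite, Proposition~\ref{prop:Borel_homology} identifies $X\ho\phi$ up to homotopy with a disjoint union of circles, one per orbit, so $\rmH_0(X\ho\phi)\cong\bbZ S$, $\rmH_1(X\ho\phi)\cong\bbZ S$, and $\rmH_q(X\ho\phi)=0$ for $q\geqslant 2$; correspondingly $\overline{\rmH}_0\cong\overline{\bbZ S}$, $\overline{\rmH}_1\cong\bbZ S$, and $\overline{\rmH}_q=0$ for $q\geqslant 2$. Feeding this into Theorem~\ref{T:A}, each group $\rmE^2_{p,q}$ becomes a \emph{finite} direct sum of tensor powers of the free abelian groups $\bbZ S$ and $\overline{\bbZ S}$; in particular the whole $\rmE^2$-page is levelwise free abelian. (No $\Tor$-terms can enter, precisely because all these groups are free.)

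\textbf{Step 2: degeneracy and splitting.} By Theorem~\ref{T:A} the spectral sequence degenerates, so $\rmE^\infty=\rmE^2$ and $\HR_n(X,\phi)$ carries a finite filtration whose subquotients are the groups $\rmE^2_{p,q}$ with $p+q=n$. As each of these is free abelian, the filtration splits and
\[
\HR_n(X,\phi)\;\cong\;\bigoplus_{p+q=n}\rmE^2_{p,q},
\]
which is free abelian — so, in particular, there is no torsion. It remains to identify this direct sum with $\bbZ S^{\otimes n}$.

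\textbf{Step 3: the induction.} The algebraic engine is the split exact sequence $0\to\overline{\bbZ S}\to\bbZ S\to\bbZ\to 0$, i.e. $\bbZ S\cong\overline{\bbZ S}\oplus\bbZ$. Writing $\overline{\rmH}^{\otimes(q-1)}\otimes\rmH=\overline{\rmH}\otimes\bigl(\overline{\rmH}^{\otimes(q-2)}\otimes\rmH\bigr)$ and using that $\overline{\rmH}$ is concentrated in degrees $0$ and $1$, unravelling the direct sum of Step~2 degree by degree should produce, for $n\geqslant 3$, a natural isomorphism
\[
\HR_n(X,\phi)\;\cong\;\bigl(\overline{\bbZ S}\otimes\HR_{n-1}(X,\phi)\bigr)\oplus\bigl(\bbZ S\otimes\HR_{n-2}(X,\phi)\bigr).
\]
Combined with the base cases $\HR_0\cong\bbZ$, $\HR_1\cong\bbZ S$ and $\HR_2\cong(\overline{\bbZ S}\otimes\bbZ S)\oplus\bbZ S\cong\bbZ S^{\otimes 2}$ — the last being Theorem~\ref{thm:C} read off with $S_\fin=S$ — an induction yields $\HR_n\cong\overline{\bbZ S}\otimes\bbZ S^{\otimes(n-1)}\oplus\bbZ S^{\otimes(n-1)}\cong(\overline{\bbZ S}\oplus\bbZ)\otimes\bbZ S^{\otimes(n-1)}\cong\bbZ S^{\otimes n}$. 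For finite $S$ the same cancellation is visible on Poincaré series: the row-$q$ contribution sums to $rx(1+x)/\bigl(1-(r-1)x-rx^2\bigr)=rx(1+x)/\bigl((1-rx)(1+x)\bigr)=rx/(1-rx)$, and the splitting of $\bbZ S$ is the group-level incarnation of the factor $1+x$ cancelling.

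\textbf{Expected main obstacle.} The conceptual work is all in Theorem~\ref{T:A}; the only genuine care needed here is in Step~3, where one must track which tensor slot is the unreduced one and handle the low degrees by hand — the recursion above really does fail for $n\leqslant 2$, because of the degree-$0$ (augmentation) row of the spectral sequence, which is why the base of the induction is supplied by Theorem~\ref{thm:C} rather than by the recursion itself. Everything goes through verbatim for an infinite set of orbits $S$, since free abelian groups are closed under arbitrary direct sums and tensor products.
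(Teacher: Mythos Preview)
Your argument is correct, but it follows a different logical route from the paper's. The paper does \emph{not} invoke the general degeneracy statement from Theorem~\ref{T:A} here; instead, in Theorem~\ref{thm:finite} it proves degeneracy in the finite case directly by computing the Poincar\'e series of the~$\rmE^2$ page (your Step~3 generating-function remark, essentially) and matching this \emph{upper} bound on the Betti numbers against the \emph{lower} bound~$r^n$ supplied by the rational computation of Etingof--Gra\~na~\cite{EtGr}. The all-finite-orbits case is then deduced by writing~$X$ as a filtered union of finite subracks (Remark~\ref{rem:infinite}). In the paper's narrative this special-case degeneracy comes \emph{before} the general one in Section~\ref{sec:degeneracy}, and serves as a warm-up for it.

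Your approach trades the external input from~\cite{EtGr} for the general degeneracy result of Theorem~\ref{thm:degeneracy}, whose proof is independent of Theorem~\ref{thm:finite}, so there is no circularity. Your recursion
\[
\HR_n\;\cong\;\bigl(\overline{\bbZ S}\otimes\HR_{n-1}\bigr)\oplus\bigl(\bbZ S\otimes\HR_{n-2}\bigr)\qquad(n\geqslant 3),
\]
obtained by peeling off one~$\overline{\rmH}_\bullet$ factor from~$\rmE^2_{\bullet,q}=\overline{\rmH}_\bullet\otimes\rmE^2_{\bullet,q-1}$ and summing over anti-diagonals, is valid (the boundary terms from~$q=0,1$ vanish precisely when~$n\geqslant 3$), and the induction cleanly yields~$\bbZ S^{\otimes n}$. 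This has the advantage of working uniformly for infinite~$S$ without the colimit argument. The paper's approach, by contrast, gives a self-contained proof of degeneracy in the finite case that does not depend on the more delicate explicit-cycle construction of Section~\ref{sec:degeneracy}.
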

}

This is proven as Theorem~\ref{thm:finite} and Remark~\ref{rem:infinite} in the main text. Briefly, we show that the functional equation satisfied by the Poincar\'e series of the~$\rmE^2$ page implies that the upper bound on the homology given by the~$\rmE^2$ page coincides with the lower bound known from the rational computations in~\cite{EtGr}: the~$n$--th Betti number of~$(X,\phi)$ equals~$r^n$, where~\hbox{$r=|X/\phi|$} is the number of orbits. In the case of a single finite orbit, the result recovers the computation for cyclic racks from~\cite[Thm.~6]{Lebed_Betti}. 

In the final Section~\ref{sec:degeneracy}, we prove the degeneracy of the spectral sequence in general, using most of our earlier results. As a consequence, in Corollary~\ref{cor:Poincare}, we can compute the entire integral homology of any permutation rack:

\begin{letterthm}
Let~$(X,\phi)$ be any permutation rack. Then the homology is a free abelian group. In particular, it is torsion free. The Poincar\'e series is given as
\[
\sum_{n=0}^\infty\rank\HR_n(X,\phi)T^n
=\frac{1+\rmT}{1-(r-1)\rmT-r_\fin\rmT^2}
\]
if the number~$r$ of orbits is finite; here~$r_\fin$ is the number of finite orbits.
\end{letterthm}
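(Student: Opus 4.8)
The plan is to read the answer directly off the $\rmE^2$ page, exploiting the degeneration established in Theorem~\ref{thm:degeneracy} together with the explicit homotopy type of the spaces $X\ho\phi$ recorded in Proposition~\ref{prop:Borel_homology}. Freeness comes first, and requires no finiteness hypothesis on the set of orbits. By the degeneration, $\HR_n(X,\phi)$ carries a finite filtration whose associated graded is $\bigoplus_{p+q=n}\rmE^2_{p,q}$, and each bigraded piece $\rmE^2_{p,q}$ is a tensor product of copies of the four groups $\rmH_0(X\ho\phi)$, $\rmH_1(X\ho\phi)$, $\overline{\rmH}_0(X\ho\phi)$ and $\overline{\rmH}_1(X\ho\phi)$. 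Since $X\ho\phi$ has the homotopy type of a disjoint union of lines and circles, all four of these are free abelian, hence so is any tensor product of them; and since a short exact sequence of abelian groups with free abelian cokernel splits, induction along the filtration shows that $\HR_n(X,\phi)$ is free abelian, and in particular torsion free. In degree zero, $\HR_0(X,\phi)\cong\bbZ$ by Theorem~\ref{thm:C}.

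Now suppose the number $r=|X/\phi|$ of orbits is finite, so that all groups in sight have finite rank. By Proposition~\ref{prop:Borel_homology}, $X\ho\phi$ is homotopy equivalent to a disjoint union of $r-r_\fin$ contractible lines and $r_\fin$ circles; hence the Poincar\'e polynomial of $\rmH_\bullet(X\ho\phi)$ is $h(\rmT):=r+r_\fin\rmT$, that of $\overline{\rmH}_\bullet(X\ho\phi)$ is $\bar h(\rmT):=(r-1)+r_\fin\rmT$, and in particular $h(\rmT)=\bar h(\rmT)+1$. Since Poincar\'e polynomials are multiplicative under tensor products of finite-rank free abelian groups, $\sum_p\rank\rmE^2_{p,q}\,\rmT^p=\bar h(\rmT)^{q-1}h(\rmT)$ for each $q\geqslant1$. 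The spectral sequence is concentrated in the first quadrant, so each total degree receives only finitely many nonzero contributions, and the degeneration gives $\rank\HR_n(X,\phi)=\sum_{p+q=n}\rank\rmE^2_{p,q}$ for $n\geqslant1$. Weighting the row $\rmE^2_{\bullet,q}$ by $\rmT^q$ to pass from the index~$p$ to the total degree $p+q$, summing over $q\geqslant1$ (the resulting geometric series converges in the $\rmT$-adic topology, since $\rmT\bar h(\rmT)$ has vanishing constant term), and adding the term $1$ coming from $\HR_0$, we obtain
\[
\sum_{n=0}^{\infty}\rank\HR_n(X,\phi)\,\rmT^n
=1+\sum_{q=1}^{\infty}\rmT^q\,\bar h(\rmT)^{q-1}h(\rmT)
=1+\frac{\rmT\,h(\rmT)}{1-\rmT\,\bar h(\rmT)}
=\frac{1+\rmT\bigl(h(\rmT)-\bar h(\rmT)\bigr)}{1-\rmT\,\bar h(\rmT)}.
\]
Substituting $h(\rmT)-\bar h(\rmT)=1$ and $\bar h(\rmT)=(r-1)+r_\fin\rmT$ turns this into the asserted closed form.

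Since the genuinely hard inputs --- the degeneration of the spectral sequence (Theorem~\ref{thm:degeneracy}) and the homotopy type of $X\ho\phi$ (Proposition~\ref{prop:Borel_homology}) --- are taken as given, I expect no real obstacle at this stage. The points needing care are bookkeeping: that the first-quadrant shape makes every total degree a finite sum of $\rmE^2$-terms; that the filtration on $\HR_n(X,\phi)$ truly splits into free summands rather than merely having free subquotients, which is automatic because those subquotients are free abelian, hence projective; and that the displayed manipulation is read as an identity of formal power series in $\rmT$. The one spot where an index slip could creep in is matching the internal-degree convention of the spectral sequence with the weighting by $\rmT^q$ that moves a row to total degree; the low-degree comparison against Theorem~\ref{thm:C} is the sanity check that pins this down.
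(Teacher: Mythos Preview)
Your proposal is correct and follows essentially the same route as the paper: the Poincar\'e series computation is the one carried out inside the proof of Theorem~\ref{thm:degeneracy} (and, for the special case, Theorem~\ref{thm:finite}), merely written with $h(\rmT)=\bar h(\rmT)+1$ in place of the paper's telescoping $f(\rmT)^q+f(\rmT)^{q-1}$, and freeness is the same ``free $\rmE^2$ page $+$ degeneration $\Rightarrow$ no extension problems'' argument. Your explicit remark that freeness needs no finiteness hypothesis on the orbit set is a small bonus over the paper, which states its Corollary~\ref{cor:Poincare} only for finite~$r$.
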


Alternatively, the~\textit{Betti numbers}~$\beta_n=\rank\HR_n(X,\phi)$ can be computed by the following recursive formula:
\[
\beta_n=
\begin{cases}
1, &\text{ if }n = 0,\\
r, &\text{ if }n = 1,\\
(r-1)\beta_{n-1} + r_\fin\beta_{n-2} &\text{ if }n\geqslant 2.
\end{cases}
\]

Beyond this definite result, we expect that the techniques of proof exposed in this paper will find applications to other computations of rack and related homologies.


\section{Preliminaries on the homology of permutation racks}

In this section, we present some general tools that help us produce and detect homology classes in permutation racks.

Let~$X$ be a set. We will write~$\bbZ X$ for the free abelian group on~$X$. Its elements are the formal linear combinations of the elements of~$X$. The subgroup~$\overline{\bbZ X}$ of~$\bbZ X$ consists of those linear combinations whose coefficients sum up to zero.

Let~$\rmT(\bbZ X)$ be the tensor algebra on the free abelian group~$\bbZ X$. This a graded abelian group that is free in each degree: in degree~$n$, we have the group~$(\bbZ X)^{\otimes n}=\bbZ(X^n)$, with basis~$X^n$. Using the algebra structure, we can write the elements as non-commutative monomials~$x_1\dots x_n$ rather than~$x_1\otimes\dots\otimes x_n$ or~$(x_1,\dots,x_n)$, which might have been more precise.

If~$(X,\rhd)$ is a rack, then the tensor algebra supports a differential of degree~$-1$ which makes it a chain complex:
\begin{align*}
\rmd(x_1\cdots x_n)
&=\sum_{k=1}^{n-1}(-1)^{k-1} 
( 
x_1\cdots x_{k-1}x_{k+1}\cdots x_{n}
-
x_1\cdots x_{k-1}(x_k\rhd x_{k+1})\cdots(x_k\rhd x_{n}) 
)\\
&=\sum_{k=1}^{n-1}(-1)^{k-1} 
x_1\cdots x_{k-1}(
x_{k+1}\cdots x_{n}
-
(x_k\rhd x_{k+1})\cdots(x_k\rhd x_{n})).
\end{align*}
This chain complex is denoted by~$\CR_\bullet(X,\rhd)$, and its homology~$\HR_\bullet(X,\rhd)$ is the~\textit{rack homology} of the rack~$(X,\rhd)$. Note that the differential is {\it not} a derivation with respect to the tensor algebra structure. 


\begin{example}\label{ex:trivial}
A rack~$(S,\rhd)$ is{\it~trivial} if the rack operation~$\rhd=\pr_2$ is given by the projection, that is~$s\rhd t=t$ for all~$s$ and~$t$ in the set~$S$. In that case, the differential on the chain complex~$\CR_\bullet(S,\pr_2)$ is zero by direct inspection, and therefore the homology of the trivial rack on~$S$ is the tensor algebra:~\hbox{$\HR_n(S,\pr_2)=(\bbZ S)^{\otimes n}$}. 
\end{example}

Trivial racks are examples of permutation racks~$(S,\pr_2)=(S,\id)$, where~$\id=\id_S$ refers to the identity permutation on~$S$. It turns out that the trivial  permutation racks help us understand all other permutation racks by means of functoriality: Let~$(X,\phi)$ be a permutation rack. The canonical projection~$X\to X/\phi$ is a morphism of racks if the set~$X/\phi=S$ of orbits is endowed with the trivial rack structure as in Example~\ref{ex:trivial}. This leads to an induced homomorphism
\begin{equation}\label{eq:test}
\HR_n(X,\phi)\longrightarrow\HR_n(X/\phi,\id)=(\bbZ S)^{\otimes n}
\end{equation}
in homology. This homomorphism gives us a means to detect elements in the homology of permutation racks. We remark that the underlying homomorphism on the chain level is obviously surjective, but this does not have to be the case on the level of homology.


If~$(X,\phi)$ is a permutation rack, the permutation~$\phi$ on~$X$ extends to an automorphism of the tensor algebra, and the formula for the differential becomes
\begin{align*}
\rmd(x_1\cdots x_n)
&=\sum_{k=1}^{n-1}(-1)^{k-1} 
x_1\cdots x_{k-1}(
x_{k+1}\cdots x_{n}
-\phi(x_{k+1}\cdots x_{n}))\nonumber\\
&=x_2\cdots x_n-\phi(x_2\cdots x_n)-x_1\rmd(x_2\cdots x_n).
\end{align*}
We can rewrite this formula as
\begin{equation}\label{eq:diff}
\rmd(xw)=w-\phi(w)-x\rmd(w)
\end{equation}
for all~$x$ in~$X$ and~$w$ in~$\CR_\bullet(X,\phi)$. Equation~\eqref{eq:diff} immediately leads to a few fundamental observations. Here is the first:

\begin{proposition}
The permutation~$\phi\colon X\to X$, which is a rack automorphism, induces the identity map~$\HR_n(X,\phi)\to\HR_n(X,\phi)$ in rack homology for all~$n$.
\end{proposition}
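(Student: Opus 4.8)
The statement to prove is that $\phi\colon X\to X$, viewed as a rack automorphism, induces the identity on $\HR_n(X,\phi)$. The natural strategy is to exhibit a chain homotopy between $\phi_*$ and the identity on the chain complex $\CR_\bullet(X,\phi)$, built directly out of the recursive formula~\eqref{eq:diff}. The key observation driving everything is that $w-\phi(w)$ is, by~\eqref{eq:diff}, ``almost'' a boundary: we have $w-\phi(w)=\rmd(xw)+x\,\rmd(w)$ for any choice of $x\in X$. So if $w$ is already a cycle, then $w-\phi(w)=\rmd(xw)$ is exactly a boundary, which immediately gives the result on homology. Let me sketch both the slick version and the chain-level version.

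\emph{Slick version.} Pick any element $x_0\in X$ (we may assume $X\neq\text{\O}$, since otherwise all groups vanish for $n\geqslant 1$ and the claim is trivial in degree $0$ as well). Define $h\colon\CR_n(X,\phi)\to\CR_{n+1}(X,\phi)$ on basis monomials by $h(w)=x_0 w$, i.e. left multiplication by $x_0$ in the tensor algebra, extended linearly. Then~\eqref{eq:diff} reads
\[
\rmd(h(w))=\rmd(x_0 w)=w-\phi(w)-x_0\,\rmd(w)=\big(\id-\phi\big)(w)-h(\rmd(w)),
\]
so that $\rmd h+h\rmd=\id-\phi_*$ on $\CR_\bullet(X,\phi)$. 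Wait — one must check that $\phi$ acting on the chain complex is genuinely a chain map, which is part of the hypothesis that $\phi$ is a rack automorphism (automorphisms of a rack induce chain maps on $\CR_\bullet$ by functoriality of the construction), and also that $\phi$ commutes with left multiplication by $x_0$ only up to replacing $x_0$ by $\phi(x_0)$ — but this discrepancy does not matter because $h$ need not commute with $\phi$ for the homotopy identity to hold; the identity $\rmd h + h\rmd = \id - \phi_*$ is exactly what~\eqref{eq:diff} says and nothing more is needed. Hence $\phi_*=\id$ on $\HR_n(X,\phi)$ for all $n$.

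\emph{The main subtlety.} The one place to be careful is the degree-$0$ and low-degree bookkeeping, and the empty-set case. In degree $0$ the complex is $\bbZ$ (or $\bbZ X^{0}=\bbZ$) with zero differential, $\phi$ acts trivially on the empty monomial, and everything is consistent; in degree $1$, $\CR_1=\bbZ X$ with $\rmd=0$, and the homotopy formula gives $(\id-\phi)(x)=\rmd(x_0 x)=x-\phi(x)$, which is of course a tautology but confirms $\phi_*=\id$ on $\HR_1$. I would present the argument essentially as the three displayed lines above, preceded by the remark that we may assume $X$ nonempty, and state explicitly that the chain map property of $\phi$ is what makes $\phi_*$ well defined. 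The only genuine ``obstacle'' is psychological: one might expect to need a more symmetric homotopy, but the asymmetry of~\eqref{eq:diff} (the recursion peels off the \emph{left}-most letter) makes left multiplication by a fixed element the natural and correct contracting operator.
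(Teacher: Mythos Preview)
Your proof is correct and is essentially the same as the paper's: both use left multiplication by a fixed element~$x_0\in X$ as the chain homotopy~$h$, and read off the identity~$\rmd h+h\rmd=\id-\phi$ directly from formula~\eqref{eq:diff}. The paper states this in one sentence without the additional bookkeeping about low degrees and the empty case, but the core argument is identical.
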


\begin{proof}
Recall that two morphisms~$f$ and~$g$ of chain complexes are chain homotopic if their difference is of the form~$f-g=\rmd h+h\rmd$ for an operator~$h$ that increases degrees by one. Then~$f$ and~$g$ induce the same homomorphism in homology. In the present situation, the formula~\eqref{eq:diff} for the differential shows that multiplication by any element in the rack~$X$ is such an operator for~$f=\id$ and~$g=\phi$.
\end{proof}


Here is another consequence of Equation~\eqref{eq:diff}:

\begin{lemma}\label{lem:S_first}
Let~$T$ be a subset of~$(X,\phi)$ that meets every orbit at least once. Then, modulo boundaries, every~$n$--chain in~$\CR_n(X,\phi)$ can be represented as a linear combination of basis elements~$w=x_1x_2\dots x_n$ that start in~$T$ in the sense that~$x_1\in T$.
\end{lemma}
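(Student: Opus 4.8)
The plan is to use Equation~\eqref{eq:diff} to rewrite any basis element~$w=x_1x_2\dots x_n$ whose first letter~$x_1$ does not lie in~$T$ as a combination of elements that do, up to boundaries. The key observation is that~$T$ meets every orbit, so for any~$x_1\in X$ there is some~$t\in T$ and some integer~$m\geqslant 0$ with~$x_1=\phi^m(t)$ (orbits are either finite cyclic or infinite, and in the infinite case we may need to be a little careful, see below). So the first task is to reduce to the case~$m=1$, i.e.\ to show that~$\phi(v)$ is, modulo boundaries, a combination of elements starting in~$T$ whenever~$v=t v'$ does; iterating then handles general~$m$.

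For this, I would apply~\eqref{eq:diff} with~$x=t\in T$ and~$w=v'=x_2\cdots x_n$:
\[
\rmd(t v') = v' - \phi(v') - t\,\rmd(v').
\]
This expresses~$\phi(v')$ in terms of~$v'$ (which has one fewer letter), of~$t\,\rmd(v')$ (whose leading letter is~$t\in T$), and of the boundary~$\rmd(tv')$. Since~$\phi$ is an algebra automorphism, $\phi(v)=\phi(t)\phi(v')$ with~$\phi(t)$ possibly outside~$T$; but we can equally run the argument to replace~$\phi(t v') = \phi(t)\phi(v')$ by noting~$t v' - \rmd(t v'') \pm \dots$. More cleanly: I would set up a downward induction on~$n$ (the chain degree) together with an induction on the ``distance'' of the first letter from~$T$. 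In degree~$n$, given~$w=x_1 w'$ with~$x_1=\phi(t)$ for~$t\in T$, write~$w = \phi(t)\,x_2\cdots x_n$. Apply~\eqref{eq:diff} once more in the form~$\rmd(t\, \phi^{-1}(w')) = \phi^{-1}(w') - w' \cdot$(sign adjustments)$ - t\,\rmd(\dots)$, so that~$w'$ minus a boundary equals~$\phi^{-1}(w')$ plus something starting in~$T$; substituting lets us trade~$w=\phi(t)w'$ against~$t\,\phi^{-1}(w')$ (starts in~$T$) plus lower-degree terms plus a boundary. Lower-degree terms are handled by the downward induction on~$n$, with the base case~$n=1$ being trivial since~$\CR_1$ has zero differential and any orbit representative works, or more simply the base case being~$n$ large where there is nothing below.

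The main obstacle is bookkeeping for infinite orbits: if~$x_1$ lies in an infinite orbit, we can always write~$x_1=\phi^m(t)$ with~$t\in T$ and~$m\in\bbZ$, and for~$m<0$ we should instead use~\eqref{eq:diff} read as~$\phi^{-1}(w)=w-\rmd(xw)-x\rmd(w)$ suitably, or simply choose~$t$ so that~$m\geqslant 0$, which is possible because~$T$ meets the orbit and we can slide. In the finite-orbit case~$m$ can be taken in~$\{0,1,\dots,\ell-1\}$ for an orbit of size~$\ell$, so only finitely many applications of~\eqref{eq:diff} are needed and termination is automatic. I would organize the proof as: (1) state the reduction ``$w$ starting at~$\phi(t)$ equals, mod boundaries, a combination of shorter chains and chains starting in~$T$''; (2) prove it by a single application of~\eqref{eq:diff} plus~$\phi$-equivariance; (3) conclude by double induction on chain degree (downward) and on the exponent~$m$, noting everything is a finite process. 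The only genuinely delicate point is making sure the induction is well-founded, which is why I would phrase the outer induction as descending on~$n$ (there are no chains in negative degree, and in top-relevant degrees the correction terms~$t\,\rmd(\cdots)$ already start in~$T$), so there is no circularity.
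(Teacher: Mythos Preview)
Your instinct---use~\eqref{eq:diff} and iterate along the orbit---is exactly right, and it is what the paper does. But your execution has a genuine gap: you apply the formula at the wrong degree and then try to ``substitute'' in a way that is not valid.

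Concretely, to handle a degree-$n$ monomial $w=x_1w'$ with $x_1=\phi(t)$, you compute $\rmd\bigl(t\,\phi^{-1}(w')\bigr)=\phi^{-1}(w')-w'-t\,\rmd(\phi^{-1}(w'))$. This is an identity in degree~$n-1$, not~$n$. You then want to multiply through by~$\phi(t)$ to get back to degree~$n$, but left multiplication by a single element is \emph{not} a chain map (indeed $\phi(t)\,\rmd(c)=c-\phi(c)-\rmd(\phi(t)c)$, which is not a boundary in general), nor does it send monomials starting in~$T$ to monomials starting in~$T$. So the ``substitution'' step fails, and the promised ``lower-degree terms'' never actually appear---everything stays in degree~$n$.

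The fix is to apply~\eqref{eq:diff} one level higher. For any $t\in T$, form $tw$ in degree~$n+1$; then
\[
\rmd(tw)=w-\phi(w)-t\,\rmd(w)
\]
is a degree-$n$ identity, and $t\,\rmd(w)$ already starts in~$T$. Hence $w\equiv\phi(w)$ modulo boundaries and monomials starting in~$T$. Iterating gives $w\equiv\phi^k(w)$ for every~$k$, and one simply picks~$k$ with $\phi^k(x_1)\in T$. No induction on~$n$ is needed, and there is no issue with the sign of~$k$ in the infinite-orbit case, since the relation works in both directions.
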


This lemma shows that in our case it might (and will!) be fruitful to change the usual order of things and study all chains up to boundaries before restricting ourselves to cycles only.

\begin{proof}
It suffices to show that every basis element~$w=x_1x_2\dots x_n$ can be represented, modulo boundaries, as a linear combination of elements that start in~$T$. 

Let~$t\in T$ be any element. Then we have~$\rmd(tw)=w-\phi(w)-t\rmd(w)$ by the formula~\eqref{eq:diff} for the differential. This says that, modulo boundaries, we can make the difference between~$w$ and~$\phi(w)$ start with any chosen~$t\in T$. By induction, the same holds for all elements~$\phi^k(w)$ in the same orbit as~$w$. It remains to be noticed that~$\phi^k(w)$ itself starts with an element~$t\in T$ at some point: take an integer~$k$ such that~$\phi^k(x_1)\in T$. This exists by our assumption on the set~$T$.
\end{proof}

\begin{remark}\label{rem:S_first}
The lemma with its proof are valid for any rack~$R$, not just permutation racks, and any generating set~$T$ of~$R$.
\end{remark}


Recall the suspension~$C[1]$ of a chain complex~$C$: we have~$C[1]_n=C_{n-1}$ and~$\rmd_{C[1]}=-\rmd_C$. The name is justified by the equation~$\rmH_n(C[1])=\rmH_{n-1}(C)$.

\begin{proposition}\label{prop:morphism}
For all~$v\in\overline{\bbZ X}$, the multiplication with~$v$ is a morphism
\[
v\colon\CR_\bullet(X,\phi)[1]\longrightarrow\CR_\bullet(X,\phi)
\]
of chain complexes.
\end{proposition}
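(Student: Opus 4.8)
The plan is to check the defining property of a chain map directly from the formula \eqref{eq:diff}. Unwinding the suspension, the degree-preserving map $v\colon\CR_\bullet(X,\phi)[1]\to\CR_\bullet(X,\phi)$ given by multiplying with the degree-one element $v$ (so that tensor degree goes up by one, matching the shift) is a morphism of chain complexes precisely when
\[
\rmd(vw)=-v\,\rmd(w)
\]
holds for every chain $w\in\CR_\bullet(X,\phi)$, the sign on the right being forced by the convention $\rmd_{C[1]}=-\rmd_C$. So the whole proposition reduces to establishing this single identity.

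To obtain it, I would first use linearity in~$v$. Writing $v=\sum_{x\in X}a_x\,x$ with $\sum_x a_x=0$, we have $\rmd(vw)=\sum_x a_x\,\rmd(xw)$. Applying \eqref{eq:diff} to each summand, $\rmd(xw)=w-\phi(w)-x\,\rmd(w)$, and summing with the coefficients~$a_x$ gives
\[
\rmd(vw)=\Bigl(\sum_x a_x\Bigr)\bigl(w-\phi(w)\bigr)-v\,\rmd(w).
\]
The hypothesis $v\in\overline{\bbZ X}$ makes the first summand vanish, leaving exactly $\rmd(vw)=-v\,\rmd(w)$, as required.

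There is no genuine obstacle here: the statement is a one-line consequence of \eqref{eq:diff} together with the vanishing of the coefficient sum. The only points deserving a moment's care are the sign coming from the suspension — which is precisely what the~$-$ on the right-hand side absorbs — and the bottom degree, where $w\in\CR_0(X,\phi)=\bbZ$, the automorphism~$\phi$ acts as the identity, and $\rmd$ vanishes, so the identity degenerates to $\rmd(v)=0$, consistent with $\rmd$ being zero on $1$-chains. It is worth noting, as the reason the hypothesis is needed, that for a general $v\in\bbZ X$ the term $\bigl(\sum_x a_x\bigr)(w-\phi(w))$ is a real obstruction: multiplication by a single generator is not a chain map to the suspension, and this is exactly the discrepancy that the homotopy orbit space $X\ho\phi$ of the introduction is designed to record.
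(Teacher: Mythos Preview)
Your proof is correct and follows essentially the same approach as the paper: both apply~\eqref{eq:diff} and use that the coefficient sum vanishes to kill the~$w-\phi(w)$ term. The only cosmetic difference is that the paper first treats the special case~$v=x-y$ and then invokes linearity, whereas you work directly with an arbitrary~$v=\sum_x a_x x$; your route is marginally more direct but not materially different.
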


\begin{proof}
Let us first assume that~$v=x-y$ is the difference of two basis elements. The formula~\eqref{eq:diff} for the differential implies
\begin{equation}\label{eq:diff_diff}
\rmd((x-y)w)=-(x-y)\rmd(w)
\end{equation}
for all chains~$w$ in the complex~$\CR_\bullet(X,\phi)$, when all differentials are computed in~$\CR_\bullet(X,\phi)$. This is the equation saying that multiplication with~\hbox{$v=x-y$} is compatible with the differentials when we use the differential from~$\CR_\bullet(X,\phi)[1]$ on one side: the suspension accounts for the sign and is required because the map is multiplication
by a degree 1 class.

In general, every element~$v\in\overline{\bbZ X}$ is a linear combination of elements of the form~$x-y$, and multiplication with that element~$v$ is the corresponding linear combination of the multiplications with the~\hbox{$x-y$}'s.
\end{proof}

Proposition~\ref{prop:morphism} makes it easy for us to write down cycles for the homology of any permutation rack~$(X,\phi)$. Indeed, all elements~$x$ of~$X$ are~$1$--cycles. Therefore, for given elements~\hbox{$x_1,\dots, y_1,\dots$} in~$X$, the element
\begin{equation}\label{eq:cycles}
(x_1-y_1)(x_2-y_2)\cdots(x_{n-1}-y_{n-1})\,x_n
\end{equation}
is automatically an~$n$--cycle. Of course, many of these elements will be boundaries if not zero. Fortunately, the homomorphism~\eqref{eq:test} already provides a device that allows us to detect elements in the homology of permutation racks.

In general, not all homology classes can be described by cycles of the form~\eqref{eq:cycles}. Here is another device that produces new cycles from old:

\begin{proposition}\label{prop:fixed}
For all fixed points~$x$ in~$X$, the multiplication with~$x^2$ is a morphism
\[
x^2\colon\CR_\bullet(X,\phi)[2]\longrightarrow\CR_\bullet(X,\phi)
\]
of chain complexes.
\end{proposition}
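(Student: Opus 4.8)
The plan is to verify directly that multiplication by~$x^2$ commutes with the differentials, using the recursive formula~\eqref{eq:diff} twice together with the hypothesis~$\phi(x)=x$. Since the double suspension~$\CR_\bullet(X,\phi)[2]$ carries the differential~$(-1)^2\rmd=\rmd$, being a morphism of chain complexes amounts to the identity~$\rmd(x^2w)=x^2\rmd(w)$ for every chain~$w$ in~$\CR_\bullet(X,\phi)$; in contrast with Proposition~\ref{prop:morphism}, there is no sign to track here.

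To establish this identity, I would start from~$\rmd(x^2w)=\rmd(x\cdot(xw))$ and apply~\eqref{eq:diff} with the chain~$xw$ in place of~$w$, obtaining~$\rmd(x^2w)=xw-\phi(xw)-x\rmd(xw)$. The fixed-point hypothesis enters precisely here: since~$\phi$ acts as an algebra automorphism and~$\phi(x)=x$, we have~$\phi(xw)=\phi(x)\phi(w)=x\phi(w)$. Next I would expand the remaining differential~$\rmd(xw)$ by~\eqref{eq:diff} once more, as~$w-\phi(w)-x\rmd(w)$, and substitute. The terms~$xw$ and~$x\phi(w)$ then cancel against~$x\cdot(w-\phi(w))$, leaving exactly~$x\cdot x\rmd(w)=x^2\rmd(w)$, as desired. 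Linearity in~$w$ reduces the general case to basis elements, so no further bookkeeping is needed.

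As with the earlier propositions, there is essentially no obstacle: the statement collapses to a two-line telescoping computation. The one point requiring a modicum of care is to invoke~$\phi(x)=x$ at exactly the right step, namely when rewriting~$\phi(xw)$, since without it the cancellation fails and~$x^2$ ceases to be a chain map. It is worth recording that this yields a further mechanism for producing cycles: if~$x$ is a fixed point and~$z$ is any~$(n-2)$--cycle, then~$x^2z$ is an~$n$--cycle, complementing the cycles of the form~\eqref{eq:cycles} and accounting, heuristically, for the extra~$\bbZ S_\fin$ summand appearing in~$\HR_2$ in Theorem~\ref{thm:C}.
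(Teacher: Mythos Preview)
Your proof is correct and follows exactly the same route as the paper: apply~\eqref{eq:diff} twice, use~$\phi(x)=x$ to rewrite~$\phi(xw)=x\phi(w)$, and cancel to obtain~$\rmd(x^2w)=x^2\rmd(w)$. The additional remarks you make about the sign under double suspension and the r\^ole of this construction in producing cycles are accurate and helpful, but the computational core is line-for-line the paper's own argument.
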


\begin{proof}
This is a straightforward computation: from~\eqref{eq:diff} we have
\begin{align*}
\rmd(x^2w)
&=xw-\phi(xw)-x\rmd(xw)\\
&=xw-x\phi(w)-x(w-\phi(w)-x\rmd(w))\\
&=x^2\rmd(w)
\end{align*}
for all chains~$w$ in the complex~$\CR_\bullet(X,\phi)$.
\end{proof}

\begin{remark}\label{rmk:fixed}
This cycle construction can be generalized to any element~$x$ from a finite~$d$--element orbit of~$X$. We only need to replace the map~$x^2$ above with the trace
\begin{align*}
x_{\tr}\colon\CR_\bullet(X,\phi)[2] &\longrightarrow\CR_\bullet(X,\phi),\\
w &\longmapsto x\sum_{i=0}^{d-1}\phi^i(xw).
\end{align*}
The computation from the proof of Proposition~\ref{prop:fixed} can be adapted as follows:
\begin{align*}
\rmd(x\sum_{i=0}^{d-1}\phi^i(xw))
&=\sum_{i=0}^{d-1} (\phi^i(xw)-\phi^{i+1}(xw))
-x\sum_{i=0}^{d-1} (\phi^i(w)-\phi^{i+1}(w))
+x\sum_{i=0}^{d-1}\phi^i(x\rmd(w))\\
&= (xw-\phi^{d}(xw))-x (w-\phi^{d}(w))+x\sum_{i=0}^{d-1}\phi^i(x\rmd(w))\\
&= (xw-x\phi^{d}(w))-x (w-\phi^{d}(w))+x\sum_{i=0}^{d-1}\phi^i(x\rmd(w))\\
&=x\sum_{i=0}^{d-1}\phi^i(x\rmd(w)).
\end{align*}
\end{remark}


\section{The homology of free permutation racks}\label{sec:free}

In this section, we explain our computation of the homology of free permutation racks.

A permutation~$\phi$ on a set~$X$ is essentially the same structure as an action of the infinite cyclic group~$\bbZ$ on~$X$. A permutation rack~$(X,\phi)$ is called {\it free} if the corresponding action is free, that is, all orbits are infinite. In that case, the permutation rack is isomorphic to a permutation rack of the form~$(\bbZ\times S,\phi)$, where~$S$ is some set, the{\it~basis}, and~$\phi(n,s)=(n+1,s)$ is the permutation acting on the Cartesian product~\hbox{$\bbZ\times S$}. In that example, we will shorten~$(0,s)$ to~$s$ and identify the set~$(\bbZ\times S)/\phi$ of orbits with~$S$.

\begin{lemma}\label{lem:vanishing_criterion}
Given a free permutation rack~$(X,\phi)$ with basis~$S$, let~$c$ be any chain in~$\CR_\bullet(X,\phi)$ that differs from its image~$\phi(c)$ under~$\phi$ by a linear combination of monomials starting in~$S$. Then the chain~$c$ is zero.
\end{lemma}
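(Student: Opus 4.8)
The plan is to combine the very explicit basis of $\CR_\bullet(X,\phi)$ afforded by the free structure $X\cong\bbZ\times S$ with a finiteness argument. Since decomposing a chain into its homogeneous components is compatible with both the hypothesis and the conclusion, I would first reduce to a chain $c$ concentrated in a single degree $n\geqslant 1$, so that $c$ is a \emph{finite} $\bbZ$--linear combination of monomials $w=x_1\cdots x_n$ with each $x_i\in\bbZ\times S$. Assign to such a monomial its \emph{level} $\ell(w)\in\bbZ$, namely the first coordinate of its first letter $x_1$. Two elementary facts then do all the work: a monomial starts in $S$ if and only if its level is $0$; and $\phi$ raises the level by one, $\ell(\phi(w))=\ell(w)+1$. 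In particular $\phi$ permutes the basis $X^n$ \emph{freely}, so every $\phi$--orbit in $X^n$ is infinite and contains exactly one monomial of level $0$.

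The second step is to unwind the hypothesis coefficient by coefficient. Writing $[u]c$ for the coefficient of a monomial $u$ in $c$, the coefficient of $u$ in $c-\phi(c)$ equals $[u]c-[\phi^{-1}(u)]c$. The assumption that $c-\phi(c)$ involves only monomials starting in $S$ thus says exactly that
\[
[u]c=[\phi^{-1}(u)]c\qquad\text{for every monomial }u\text{ with }\ell(u)\neq 0 .
\]
Consequently, if $w_0$ denotes the level--$0$ monomial in a given $\phi$--orbit, then $[\phi^{k}(w_0)]c$ is independent of $k$ for $k\geqslant 0$, and, separately, independent of $k$ for $k\leqslant -1$; the single relation the hypothesis does not provide is the one linking $[w_0]c$ with $[\phi^{-1}(w_0)]c$.

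The third step concludes by finiteness. A chain has only finitely many nonzero coefficients, yet each of the two ``rays'' $\{\phi^{k}(w_0):k\geqslant 0\}$ and $\{\phi^{k}(w_0):k\leqslant -1\}$ is infinite; a coefficient that is constant along an infinite ray must vanish. Hence $[\phi^{k}(w_0)]c=0$ for all $k\in\bbZ$ and for every orbit, which is to say $c=0$.

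The one point that needs care is precisely the bookkeeping at level $0$: the hypothesis deliberately tolerates the ``jump'' at the level--$0$ monomial, so one cannot sweep across an entire orbit in a single relation. This causes no harm, because finiteness annihilates the two half-orbits $\{k\geqslant 0\}$ and $\{k\leqslant -1\}$ independently. It is also exactly here that freeness of $(X,\phi)$ enters the picture: a finite orbit in $X^n$ could carry a nonzero coefficient that is constant along it, entirely consistently with finiteness, so the freeness assumption cannot simply be dropped.
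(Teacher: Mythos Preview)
Your argument is correct and is essentially the same as the paper's. Both proofs exploit that $\phi$ acts freely on the monomial basis of $\CR_n(X,\phi)$, translate the hypothesis into an equality of neighbouring coefficients along a $\phi$--orbit except at the single level--$0$ monomial, and then invoke finiteness of the support on the two resulting infinite half-orbits. Your packaging via the explicit level function $\ell$ and the coefficient notation $[u]c$ is a touch cleaner than the paper's case split on the sign of $m$, but the mathematical content is identical.
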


\begin{proof}
We can write any chain~$c$ as a linear combination of monomials~$w=x_1\dots x_n$ with integral coefficients. Fix one monomial~$w$. We shall show that the coefficient in front of~$w$ is zero if~$c$ satisfies the assumption in the statement.

By freeness, there is exactly one integer~$m$ such that the monomial~$\phi^m(w)$ starts in~$S$. Either~$m$ is positive and~$\phi^n(w)$ does not start in~$S$ for all~$n\leqslant0$, or~$m$ is non-positive and~$\phi^n(w)$ does not start in~$S$ for all~$n\geqslant 1$. We shall show that the coefficient in front of~$w$ is zero in both cases.

Assume first that~$\phi^n(w)$ does not start in~$S$ for all~$n\leqslant0$. Then~$w$ does not start in~$S$ and, by assumption, it has to appear in~$\phi(c)$ with the same coefficient. It follows that~$\phi^{-1}(w)$ appears in~$c$ with the same coefficient as~$w$. Inductively, we see that all the~$\phi^n(w)$ with~$n\leqslant-1$ appear in~$c$ with the same coefficient. Since only finitely many coefficients can be non-zero, all the coefficients of these~$\phi^n(w)$ have to be zero, and this holds, in particular, for the one in front of~$w$.

Assume now that~$\phi^n(w)$ does not start in~$S$ for all~$n\geqslant 1$. Then the monomial~$\phi(w)$ appears in~$c$ and~$\phi(c)$ with the same coefficient, which is the coefficient of~$w$ in~$c$. Inductively, we see that all~$\phi^n(w)$ with~$n\geqslant0$ have the same coefficient in~$c$. As above, this coefficient has to vanish. 
\end{proof}

\begin{lemma}\label{lem:epi}
Given a free permutation rack~$(X,\phi)$ with basis~$S$, any~$n$--cycle~$c$, with~$n\geqslant2$, can be represented in~$\overline{\bbZ S}\otimes\CR_{n-1}(X,\phi)$ modulo boundaries.
\end{lemma}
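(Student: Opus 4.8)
The plan is to combine Lemma~\ref{lem:S_first} with Lemma~\ref{lem:vanishing_criterion}: first replace $c$ by a representative supported on monomials that begin in the basis $S$, and then observe that, being a cycle, this representative is automatically forced into the subgroup $\overline{\bbZ S}\otimes\CR_{n-1}(X,\phi)$.

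I would start by applying Lemma~\ref{lem:S_first} with $T=S$; this is legitimate because, for a free permutation rack, the basis $S$ meets every orbit. The lemma produces a chain $c'$ that is congruent to $c$ modulo boundaries and is a linear combination of monomials $x_1x_2\cdots x_n$ with $x_1\in S$. These monomials span the subgroup of $\CR_n(X,\phi)$ that we identify with $\bbZ S\otimes\CR_{n-1}(X,\phi)$, so we may write
\[
c'=\sum_{s\in S}s\,c_s,\qquad c_s\in\CR_{n-1}(X,\phi),
\]
with only finitely many $c_s$ nonzero. Note that $c'$ is again a cycle, since it differs from the cycle $c$ by a boundary.

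Next I would use the cycle condition. By the formula~\eqref{eq:diff} for the differential,
\[
0=\rmd c'=\sum_{s\in S}\bigl(c_s-\phi(c_s)-s\,\rmd(c_s)\bigr)=d-\phi(d)-\sum_{s\in S}s\,\rmd(c_s),
\]
where $d:=\sum_{s\in S}c_s\in\CR_{n-1}(X,\phi)$ and we used the linearity of $\phi$. Hence $d-\phi(d)=\sum_{s\in S}s\,\rmd(c_s)$, and the right-hand side is manifestly a linear combination of monomials that start in $S$. Since $n\geqslant 2$, the chain $d$ has positive degree, so Lemma~\ref{lem:vanishing_criterion} applies to $d$ and yields $d=0$; that is, $\sum_{s\in S}c_s=0$.

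Finally, because $\CR_{n-1}(X,\phi)$ is free abelian, tensoring the short exact sequence $0\to\overline{\bbZ S}\to\bbZ S\xrightarrow{\epsilon}\bbZ\to 0$ (with $\epsilon$ the augmentation) with it stays exact and identifies $\overline{\bbZ S}\otimes\CR_{n-1}(X,\phi)$ with the kernel of $\epsilon\otimes\id$, which consists precisely of those $\sum_{s}s\,c_s$ with $\sum_{s}c_s=0$. Thus $c'$ lies in $\overline{\bbZ S}\otimes\CR_{n-1}(X,\phi)$, as required. The only step that needs an idea rather than bookkeeping is the middle one: recognising that, once $c'$ has been pushed onto monomials starting in $S$, the cycle condition says exactly that $d-\phi(d)$ is concentrated on such monomials, which is the hypothesis of Lemma~\ref{lem:vanishing_criterion}; the rest is routine, and it is also where the hypothesis $n\geqslant 2$ is genuinely used, since Lemma~\ref{lem:vanishing_criterion} has no content in degree $0$.
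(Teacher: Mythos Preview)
Your proof is correct and follows essentially the same route as the paper: apply Lemma~\ref{lem:S_first} to arrange that the cycle is of the form $\sum_{s\in S} s\,c_s$, then use the cycle condition together with Lemma~\ref{lem:vanishing_criterion} to force $\sum_s c_s=0$. The only cosmetic difference is that the paper makes the conclusion explicit by choosing $t\in S$ and rewriting $c=\sum_s(s-t)c_s+t\bigl(\sum_s c_s\bigr)$, whereas you phrase it via the identification of $\overline{\bbZ S}\otimes\CR_{n-1}$ with $\ker(\epsilon\otimes\id)$; these are equivalent.
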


\begin{proof}
We already know from Lemma~\ref{lem:S_first} that we can represent any chain~$c$  modulo boundaries by an element that lives in~$\bbZ S\otimes\CR_{n-1}(X,\phi)$. This means that we can assume~$c$ to be of the form
\[
c=\sum_{s\in S} sc_s
\]
for suitable chains~$c_s$. We choose an element~$t$ in~$S$ and rewrite the expression for~$c$ as
\[
c=\sum_{s\in S} (s-t)c_s+t\left(\sum_{s\in S}c_s\right).
\]
Since~$s-t\in\overline{\bbZ S}$, it is sufficient to show that the rightmost sum vanishes. 

We recall that~$c$ is assumed to be a cycle, so that we know 
\[
0=\rmd(c)=-\sum_{s\in S}(s-t)\rmd(c_s)+\left(\sum_{s\in S}c_s\right)-\phi\left(\sum_{s\in S}c_s\right)-t\rmd\left(\sum_{s\in S}c_s\right)
\]
from~\eqref{eq:diff_diff} and~\eqref{eq:diff}. We see that the sum~$\sum_{s\in S}c_s$ satisfies the conditions of Lemma~\ref{lem:vanishing_criterion}: it differs from its image under~$\phi$ only by monomials that start in~$S$. It follows from the lemma that the sum is zero, as claimed.
\end{proof}


We are now ready for the main result of this section.

\begin{theorem}\label{thm:free}
For any sets~$S$, let~$(\bbZ\times S,+1)$ be the free permutation rack on~$S$, with the permutation~\hbox{$+1(n,s)=(n+1,s)$}. The natural homomorphism
\[
\overline{\bbZ S}^{\otimes(n-1)}\otimes\bbZ S\overset{\cong}{\longrightarrow}\HR_n(\bbZ\times S,+1),
\]
given by multiplication, is an isomorphism.
\end{theorem}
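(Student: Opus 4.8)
The plan is to prove that the multiplication homomorphism
\[
\mu\colon\overline{\bbZ S}^{\otimes(n-1)}\otimes\bbZ S\longrightarrow\HR_n(\bbZ\times S,+1)
\]
is injective and surjective by two independent arguments: injectivity via the detection map~\eqref{eq:test}, and surjectivity by induction on~$n$, fed by Lemma~\ref{lem:epi}. First one checks that~$\mu$ is well defined: by Proposition~\ref{prop:morphism} and the fact that every element of~$X$ is a~$1$--cycle, every product~$(x_1-y_1)\cdots(x_{n-1}-y_{n-1})x_n$ with~$x_i,y_i\in S$ is an~$n$--cycle, and, by~\eqref{eq:diff_diff}, replacing any one factor by a boundary produces a boundary; hence multiplication descends to a homomorphism~$\mu$ as above.

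\textbf{Injectivity.} I would postcompose~$\mu$ with the homomorphism~\eqref{eq:test} induced by the rack projection~$\pi\colon(\bbZ\times S,+1)\to(S,\id)$, $(m,s)\mapsto s$, whose target is~$\HR_n(S,\id)=(\bbZ S)^{\otimes n}$ by Example~\ref{ex:trivial}. On chains~$\pi$ merely forgets the~$\bbZ$--coordinate, so the composite~$\pi_\ast\circ\mu$ sends a generator~$(s_1-t_1)\otimes\dots\otimes(s_{n-1}-t_{n-1})\otimes s_n$ to itself, now read inside~$(\bbZ S)^{\otimes n}$; that is, $\pi_\ast\circ\mu$ is the canonical inclusion~$\overline{\bbZ S}^{\otimes(n-1)}\otimes\bbZ S\hookrightarrow(\bbZ S)^{\otimes n}$ obtained by tensoring up the inclusion~$\overline{\bbZ S}\hookrightarrow\bbZ S$. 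Since any choice of~$t\in S$ splits~$\bbZ S=\overline{\bbZ S}\oplus\bbZ t$, this is a split monomorphism of free abelian groups, hence injective, and therefore so is~$\mu$.

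\textbf{Surjectivity.} I induct on~$n\geqslant1$. For~$n=1$ one has~$\HR_1(\bbZ\times S,+1)\cong\bbZ S$ by a direct computation, under which~$\mu$ becomes the identity. For~$n\geqslant2$, let~$c$ be an~$n$--cycle. By Lemma~\ref{lem:epi} I may assume, modulo boundaries, that~$c=\sum_{s\in S}(s-t)c_s$ for a fixed~$t\in S$ and suitable~$(n-1)$--chains~$c_s$. As~$c$ is a cycle, $0=\rmd c=-\sum_s(s-t)\rmd(c_s)$ by~\eqref{eq:diff_diff}; for each~$s\neq t$ the monomials beginning with the letter~$(0,s)$ occur on the right only inside the term~$(s-t)\rmd(c_s)$, so comparing coefficients forces~$\rmd(c_s)=0$, i.e.\ every~$c_s$ is an~$(n-1)$--cycle. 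By the inductive hypothesis each class~$[c_s]$ lies in the image of~$\mu$ in degree~$n-1$; left-multiplying the corresponding representatives by~$s-t$ (remaining boundary--compatible by~\eqref{eq:diff_diff}) exhibits~$[c]$ in the image of~$\mu$ in degree~$n$. This closes the induction, and with injectivity it shows~$\mu$ is an isomorphism.

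\textbf{Main obstacle.} The real work is already done by Lemma~\ref{lem:S_first} and Lemma~\ref{lem:epi}, which push cycles into~$\overline{\bbZ S}\otimes\CR_{n-1}(X,\phi)$. The only point requiring care beyond bookkeeping with~\eqref{eq:diff} is the small refinement used in the inductive step, namely upgrading the chains~$c_s$ to \emph{cycles}: for~$n=2$ this is automatic since~$\rmd$ vanishes on~$\CR_1$, and for~$n\geqslant3$ it is exactly the coefficient comparison above. (The case~$S=\emptyset$ is trivial, and nothing in the argument breaks when~$\overline{\bbZ S}=0$.)
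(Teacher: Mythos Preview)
Your proof is correct and follows the same route as the paper: injectivity via the detection map~\eqref{eq:test} composed to the inclusion~$\overline{\bbZ S}^{\otimes(n-1)}\otimes\bbZ S\hookrightarrow(\bbZ S)^{\otimes n}$, and surjectivity via Lemma~\ref{lem:epi} applied inductively. The paper compresses the inductive step into a single phrase, whereas you spell out the coefficient comparison showing each~$c_s$ is itself a cycle; this extra detail is correct and is exactly what the paper's word ``inductively'' is hiding.
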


Taking a one-element set~$S$, we recover the homology of the free monogenic rack, cf.~\cite{FRS07,FGG}: 
\[
\HR_n(\bbZ,+1)\cong
\begin{cases}
\bbZ &\text{ if } n=0,1,\\
0 &\text{ if }n\not=0,1.
\end{cases}
\]

\begin{proof}
Let us first note that the map is well-defined: Take a typical  element~\hbox{$v_1\otimes\dots\otimes v_{n-1}\otimes s$} with~\hbox{$v_j\in\overline{\bbZ S}$} and~$s\in S$. It follows from Proposition~\ref{prop:morphism} and the discussion following it that this defines a cycle in~$\CR_\bullet(\bbZ\times S,+1)$, and then it represents a homology class.

It follows from Lemma~\ref{lem:epi}, inductively, that the resulting homomorphism is surjective onto the homology. It remains to be seen that it is injective. We do this by composing it with the morphism
\[
\HR_n(\bbZ\times S,+1)\longrightarrow(\bbZ S)^{\otimes n}
\]
from~\eqref{eq:test}. Since all maps are the identity on representatives, this composition is the inclusion, hence injective. Then the first map also has to be injective, as claimed.
\end{proof}

\begin{corollary}
The homology of a free permutation rack~$(X,\phi)$ is free as an abelian group. In particular, it is torsion-free. If the number of orbits is finite, say~$r$, then the Betti numbers are given by
\[
\rank\HR_n(X,\phi)=(r-1)^{n-1}r=(r-1)^n+(r-1)^{n-1}
\]
for all~$n\geqslant 1$.
\end{corollary}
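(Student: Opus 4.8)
The plan is to deduce the corollary directly from Theorem~\ref{thm:free}, which identifies $\HR_n(X,\phi)$ with $\overline{\bbZ S}^{\otimes(n-1)}\otimes\bbZ S$ for the free permutation rack with basis $S=X/\phi$. First I would note that $\bbZ S$ is free abelian (on the set $S$), and that $\overline{\bbZ S}$, being the kernel of the augmentation $\bbZ S\to\bbZ$, is a subgroup of a free abelian group, hence itself free abelian; in fact it is free on the set $\{s-t : s\in S\setminus\{t\}\}$ for any fixed $t\in S$. A tensor product over $\bbZ$ of free abelian groups is free abelian, so each $\HR_n(X,\phi)$ is free abelian, and in particular torsion-free. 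This handles the first two assertions for \emph{any} free permutation rack, regardless of the cardinality of $S$.

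Next I would compute ranks in the case $|S|=r<\infty$. Here $\rank\bbZ S=r$ and $\rank\overline{\bbZ S}=r-1$, and rank is multiplicative under $\otimes_\bbZ$ for finitely generated free abelian groups, so
\[
\rank\HR_n(X,\phi)=(r-1)^{n-1}\cdot r
\]
for all $n\geqslant 1$. The identity $(r-1)^{n-1}r=(r-1)^n+(r-1)^{n-1}$ is then just the elementary factorization $r=(r-1)+1$ applied to the leading factor; I would simply remark on this rather than belabor it.

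I do not anticipate a genuine obstacle: the entire content has already been extracted in Theorem~\ref{thm:free}, and what remains is bookkeeping about free abelian groups and their ranks. The only point deserving a word of care is the claim that $\overline{\bbZ S}$ is free abelian when $S$ is infinite --- this needs that subgroups of free abelian groups are free abelian (true for arbitrary rank, by the Nielsen--Schreier-type theorem for abelian groups), or, more concretely, one exhibits the explicit basis $\{s-t\}_{s\neq t}$, which makes freeness manifest without invoking that theorem. With that settled, the corollary follows immediately.
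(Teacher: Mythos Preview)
Your proposal is correct and matches the paper's approach: the paper states this corollary without proof, leaving it as an immediate consequence of Theorem~\ref{thm:free}, and what you have written is exactly the straightforward unpacking of that isomorphism. Your care about the freeness of~$\overline{\bbZ S}$ in the infinite case is a nice touch, though the explicit basis~$\{s-t\}_{s\neq t}$ already settles it without appeal to general theorems.
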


\begin{remark}
The free permutation rack is the product of racks:
\[
(\bbZ\times S,+1)\cong(\bbZ,+1)\times(S,\id).
\]
Our theorem and Example~\ref{ex:trivial} yield the entire homology of the three racks involved, and we will use the occasion to point out that a naive version of the K\"unneth theorem for rack homology is false. Indeed, given two racks~$(X,\rhd_X)$ and~$(Y,\rhd_Y)$, and their direct product~$(X\times Y,\rhd_X\times\rhd_Y)$, it is tempting to conjecture the existence of a short exact sequence
\[
0\to\bigoplus_{p+q=n}
\HR_p(X)\otimes\HR_q(Y)
\longrightarrow
\HR_n(X\times Y)
\longrightarrow\bigoplus_{p+q=n-1}\Tor^1_\bbZ(\HR_p(X),\HR_q(Y))
\to0.
\]
For a free permutation rack, this sequence would imply
\begin{align*}
\HR_n(\bbZ\times S,+1)&\;\cong\;\bigoplus_{p+q=n}\HR_p(\bbZ,+1)\otimes\HR_q(S,\id)\\
&\cong\; (\bbZ S)^{\otimes n}\oplus(\bbZ S)^{\otimes(n-1)},
\end{align*}
contradicting our theorem.
\end{remark}

\begin{remark}\label{rem:FreeRack}
Similar ideas yield a very explicit computation of the homology of the free rack~$\FR_n$ on a generating set~$\{x_1,\ldots,x_n\}$ with~$n$ elements, considerably more concise than the method from~\cite{FGG}. Recall that such a free rack~$\FR_n$ can be modelled on the set~\hbox{$\rmF_n\times\{x_1,\ldots,x_n\}$}, where~$\rmF_n$ is the free group on the set~$\{x_1,\ldots,x_n\}$, and the rack operation is given by~\hbox{$(u,x)\rhd (w,y)=(uxu^{-1}w,y)$}. This free rack has~$n$~orbits, and the elements~$X_i:=(e,x_i)$ are convenient orbit representatives, where~$e$ is the neutral element in the group. The action of these orbit representatives is particularly simple:~$X_i\rhd (w,y) = (x_iw,y)$. We will denote the inverse of this action by~\hbox{$X_i^{-1}\rhd (w,y) = (x_i^{-1}w,y)$}. Observe that the data of~$(w,y)$ and~$X_i^{\pm 1}\rhd (w,y)$ allow us to recover the element~$X_i$. We can now use this notation to show that
\[
\HR_m(\FR_n)\cong
\begin{cases}
\bbZ &\text{ if } m=0,\\
\bbZ^n &\text{ if } m=1,\\
0 &\text{ if }m\not=0,1.
\end{cases}
\]
For~$m\leqslant 1$ the result is classical. Therefore, let us take an~$m$--cycle with~$m>1$. We want to see that it is a boundary. As explained in Remark~\ref{rem:S_first}, we can add boundaries to the cycle until all monomials appearing in it start with one of the orbit representatives~$X_i$. When that is done, we can write the cycle in the form
\begin{equation}\label{eq:c}
c=\sum_k\varepsilon_k X_{j(k)} w_k,
\end{equation}
where~$\varepsilon_k =\pm 1$ is a sign, the~$X_{j(k)}\in\{X_1,\ldots,X_n\}$ are orbit representatives, and~$w_k\in\FR_n^{m-1}$. We will now show that the terms of~$c$ cancel each other in pairs, so that the whole expression~\eqref{eq:c} is zero. To start with, an analogue of formula~\eqref{eq:diff} yields
\begin{equation}\label{eq:FreeRack}
0=\rmd(c)=\sum_k\varepsilon_k w_k -\sum_k\varepsilon_kX_{j(k)}\rhd w_k -\sum_k\varepsilon_kX_{j(k)}\rmd(w_k).
\end{equation}
Here the action~$\rhd$ is extended from~$\FR_n$ to~$\FR_n^{m-1}$ diagonally. Since the sum~\eqref{eq:FreeRack} vanishes, the monomials in~\eqref{eq:FreeRack} can be partitioned into pairs of identical monomials appearing with opposite signs. Let us fix such a partition into pairs, and let us call two of its pairs~\emph{connected} if one contains the monomial~$\varepsilon_k w_k$, and the other the monomial~$-\varepsilon_kX_{j(k)}\rhd w_k$, for the same index~$k$. Since~$w_k\in\FR_n^{m-1}$ and~$m-1>0$, we have~$w_k\neq  X_{j(k)}\rhd w_k$ for all indices~$k$. Thus, a pair with at least one monomial coming from the part~$\sum_k\varepsilon_k w_k -\sum_k\varepsilon_kX_{j(k)}\rhd w_k$ of the sum~\eqref{eq:FreeRack} is connected to one or two pairs. If the cycle~$c$ is non-zero, then this part of the sum is non-empty, and there exist connected pairs. Since the sum is finite, the connected pairs form at least
\begin{itemize}[leftmargin=.15\linewidth, rightmargin=\leftmargin, nosep]
\item[(a)] a closed chain, or
\item[(b)] an open chain where the first and the last pair involve a monomial from the part~$\sum_k\varepsilon_kX_{j(k)}\rmd(w_k)$ of the sum.
\end{itemize} 
Recall that each~$X_{j(k)}$ is in fact some~$X_j=(e,x_j)$. Thus, in any of the two situations~(a) and~(b), there is a chain of connected pairs between two monomial pairs~$\pm w$ and~$\pm w'$, both in~$\FR_n^{m-1}$,  whose first~$\FR_n$ components~$(u,x)$ and~$(u',x')$ have the same~$\rmF_n$ part:~$u=u'$ and~$x=x'$ in case~(a), and~$u=u'=e$ in case~(b). By construction, the monomials from two connected pairs are each related by the action of some~$X_i$. Thus, we get a relation of the form
\[
(u',x') = X_{i(l)}^{\pm 1}\rhd\cdots\rhd X_{i(1)}^{\pm 1}\rhd (u,x) = (x_{i(l)}^{\pm 1}\cdots x_{i(1)}^{\pm 1}  u,x),
\]
where the parentheses were omitted to enhance the readability. As a result of~$u=u'$, the non-empty word~$x_{i(l)}^{\pm 1}\cdots x_{i(1)}^{\pm 1}$ represents the neutral element of the free group~$\rmF_n$. This word, therefore, contains neighbouring elements~$x_i$ and~$x_i^{-1}$, and these elements yield a chain of connected pairs~\hbox{$\pm X_i\rhd w$,~$\pm w$,~$\pm X_i\rhd w$}~(or~\hbox{$\pm w$,~$\pm X_i\rhd w$,~$\pm w$}). Then, the part~$\sum_k\varepsilon_k w_k$ of the sum~\eqref{eq:FreeRack} contains the monomials~$w$ and~$-w$~(respectively, the part~$\sum_k\varepsilon_k X_{j(k)}\rhd w_k$ of the sum contains the monomials~$ X_i\rhd w$ and~$-X_i\rhd w$) coming from the identical monomials~$X_iw$ and~$-X_iw$ of~$c$, just with opposite signs. Therefore, we can cancel this
pair, and continue until the whole expression~\eqref{eq:c} for~$c$ becomes zero.
\end{remark}


\section{Permutations and their equivariant homology}\label{sec:Borel}

Permutation racks~$(X,\phi)$ are essentially sets~$X$ with an action of the infinite cyclic group~$\bbZ$, or~$\bbZ$--sets for short. The integer~$n$ acts as~$\phi^n$. We refer to~\cite[Sec.~3]{Szymik:Center} for a detailed discussion of the relation between racks and permutations on the categorical level. In this section, we will very briefly review the equivariant homology theory for actions of a fixed group~$G$, and then we specialize it to the case of the infinite cyclic group~$\bbZ$. Proposition~\ref{prop:Borel_homology}, the computation of the equivariant homology of permutation actions, will be used in the following Section~\ref{eq:spectral_sequence}.


Let us fix a discrete group~$G$. If the group~$G$ is not trivial, then not all~$G$--sets~$X$ are free. Therefore, we need to choose free resolutions. These are~$G$--maps~\hbox{$F_\bullet\to X$} that are equivalences, where~$F$ is a simplicial~$G$--set, and the~$G$--action on~$F$ is free~(on each set of~$n$--simplices). For instance, the classifying space~$\rmE G_\bullet$ coming from the bar construction is a contractible space on which the group~$G$ acts freely; it provides functorial free simplicial resolutions
\begin{equation}\label{eq:resolution}
\rmE G_\bullet\times X=F_\bullet\to X
\end{equation}
for all~$G$--sets~$X$. The equivariant homology~$\rmH_\bullet^G(X)$ of a~$G$--set~$X$ is the homology of the orbit space~\hbox{$F_\bullet/G\simeq\rmE G_\bullet\times_G X$}. Up to homotopy, this space does not depend on the resolution~$F_\bullet$ used to compute it, and it is common to denote this{\it~homotopy orbit space} by~$X\ho G$. These constructions work more generally for~$G$--spaces, or rather simplicial~$G$--sets, and they are interesting even for the trivial~$G$--set~$X=\star$ consisting of one fixed point only: its equivariant homology is the homology of the group~$G$.

\begin{remark}
Equivariant homology in this form was initiated by Borel~\cite{Borel:seminar}. It is obvious from the description above that Borel's equivariant homology theory agrees with Quillen's general homology theory~\cite{Quillen:summary} when specialized to the algebraic theory of~$G$--sets for a fixed group~$G$. This fact is well-known and we emphasize that it plays no role in the following. We refer to~\cite{Szymik:Quillen} for Quillen's homology theory when specialized to the algebraic theory of racks.
\end{remark}


The situation is particularly transparent for the infinite cyclic group~$G=\bbZ$ of interest to us. A~$\bbZ$--action is determined by the permutation~$\phi$ corresponding to the generator. The real line~$\bbR$ gives  us a suitable intuition for the classifying space~$\rmE\bbZ_\bullet$, as it is contractible and the group~$\bbZ$ acts freely on it. However, the topological space~$\bbR$ is not a simplicial set, and we have to use a simplicial model, such as~$\rmE\bbZ_\bullet$, for it. We then need to know the equivariant homology
\[
\rmH^\bbZ_\bullet(X)=\rmH_\bullet(X\ho\phi)
\]
of any~$\bbZ$--set~$X$. For actions of the group~$\bbZ$, the homotopy orbit space~\hbox{$X\ho\phi=\rmE\bbZ_\bullet\times_\bbZ X$} is a simplicial model for the mapping torus~$\bbR\times_\bbZ X$ of the self-map~$\phi\colon X\to X$. The homology of the mapping torus sits in a long exact sequence
\[
\xymatrix@C=3em{
\cdots\ar[r]&
\rmH_\bullet(X)\ar[r]^-{\id-\phi_\bullet}&
\rmH_\bullet(X)\ar[r]&
\rmH_\bullet(X\ho\phi)\ar[r]&
\rmH_{\bullet-1}(X)\ar[r]^-{\id-\phi_\bullet}&\cdots.
}
\]

\begin{proposition}\label{prop:Borel_homology}
Let~$\phi$ be a permutation on a set~$X$. Let~$S=X/\phi$ be the set of orbits of~$\phi$, and let~\hbox{$S_\fin\subseteq S$} be the subset of finite orbits. Then the equivariant homology of~$(X,\phi)$ is given as
\[
\rmH_n(X\ho\phi)\cong
\begin{cases}
\bbZ S &\text{\upshape if } n=0,\\
\bbZ S_\fin &\text{\upshape if }  n=1,\\
0 &\text{\upshape otherwise.}
\end{cases}
\] 
\end{proposition}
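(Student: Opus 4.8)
The plan is to compute $\rmH_\bullet(X\ho\phi)$ directly from the long exact sequence of the mapping torus
\[
\cdots\longrightarrow\rmH_n(X)\xrightarrow{\ \id-\phi_*\ }\rmH_n(X)\longrightarrow\rmH_n(X\ho\phi)\longrightarrow\rmH_{n-1}(X)\xrightarrow{\ \id-\phi_*\ }\cdots
\]
recalled just above the statement. Since $X$ is a discrete set, its homology is concentrated in degree zero, where $\rmH_0(X)=\bbZ X$, the free abelian group on $X$; all higher $\rmH_n(X)$ vanish. Hence the only interesting part of the long exact sequence is the segment
\[
0\longrightarrow\rmH_1(X\ho\phi)\longrightarrow\bbZ X\xrightarrow{\ \id-\phi_*\ }\bbZ X\longrightarrow\rmH_0(X\ho\phi)\longrightarrow 0,
\]
and $\rmH_n(X\ho\phi)=0$ automatically for $n\geqslant 2$. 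So everything reduces to computing the kernel and cokernel of the endomorphism $\id-\phi_*$ of $\bbZ X$, where $\phi_*$ is the permutation action of $\phi$ on the basis $X$.

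Next I would reduce to a single orbit. The $\bbZ$--set $X$ decomposes as a disjoint union of its orbits, $X=\coprod_{s\in S}\calO_s$, and this decomposition is respected by $\phi$, hence by $\id-\phi_*$; so $\bbZ X=\bigoplus_{s\in S}\bbZ\calO_s$ as $\bbZ[\phi]$--modules and both kernel and cokernel split as direct sums over $S$. For an infinite orbit, $\bbZ\calO_s\cong\bbZ[\bbZ]=\bbZ[t,t^{-1}]$ with $\phi_*$ acting as multiplication by $t$; multiplication by $1-t$ on $\bbZ[t,t^{-1}]$ is injective (it is a domain), and its cokernel is $\bbZ[t,t^{-1}]/(1-t)\cong\bbZ$. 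For a finite orbit of size $d$, $\bbZ\calO_s\cong\bbZ[t]/(t^d-1)$ with $\phi_*$ acting as multiplication by $t$; here $1-t$ is a zero divisor, its kernel is generated by $1+t+\dots+t^{d-1}$ and is free of rank one, and its cokernel is $\bbZ[t]/(t^d-1,1-t)\cong\bbZ$ as well. One could equally argue combinatorially: $\id-\phi_*$ on the permutation module of a single orbit is, up to a sign, the boundary map of the standard cyclic (or infinite linear) complex, whose reduced homology is $\bbZ$ in degree one exactly for finite orbits and trivial otherwise. Summing over $s\in S$ gives $\ker(\id-\phi_*)\cong\bbZ S_\fin$ and $\operatorname{coker}(\id-\phi_*)\cong\bbZ S$, and feeding this back into the four-term exact sequence yields $\rmH_0(X\ho\phi)\cong\bbZ S$ and $\rmH_1(X\ho\phi)\cong\bbZ S_\fin$, as claimed.

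There is no real obstacle here; the only point demanding a modicum of care is the identification of the maps in the mapping-torus sequence — specifically that the connecting map $\rmH_\bullet(X)\to\rmH_\bullet(X)$ really is $\id-\phi_*$ and that the outer terms genuinely vanish because $X$ is $0$--dimensional — after which the computation is the elementary linear algebra of $\id-t$ on a permutation module sketched above. An alternative, perhaps cleaner, route would be to bypass the long exact sequence entirely and invoke the homotopy decomposition mentioned in the introduction, namely that $X\ho\phi$ is homotopy equivalent to a disjoint union of a line for each infinite orbit and a circle $S^1$ for each finite orbit; then the homology is read off immediately from $\rmH_\bullet(\mathrm{pt})$ and $\rmH_\bullet(S^1)$. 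I would likely present the mapping-torus computation as the main argument, since it is self-contained and makes explicit why the answer takes this form, and only remark on the geometric picture afterward.
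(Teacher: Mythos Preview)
Your proof is correct. Both you and the paper reduce to a single orbit and then handle the free and finite cases separately, but the emphasis is reversed. The paper's primary argument is geometric: for a free orbit, $X\ho\phi\simeq X/\phi$ is a point; for a finite orbit, the mapping torus is a circle; read off the homology and sum over orbits. The long exact sequence is mentioned only as an alternative for the finite case. Your primary argument is the algebraic one, computing $\ker(\id-\phi_*)$ and $\operatorname{coker}(\id-\phi_*)$ on each $\bbZ\calO_s$ explicitly via $\bbZ[t,t^{-1}]$ and $\bbZ[t]/(t^d-1)$, and you mention the homotopy decomposition only at the end. Your route is more self-contained (no appeal to the homotopy type of a mapping torus) and makes the structure of the answer transparent; the paper's route is shorter once one grants the geometric identifications. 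Since you already sketch the paper's argument as your alternative, there is nothing substantive to add.
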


\begin{proof}
Let us first assume that~$X$ is a single orbit. If~$X$ is a free orbit, then there is no need to resolve~$X$, and~$X\ho\phi\simeq X/\phi$ is a point. This proves the claim for a single free orbit. If~$X$ is a finite orbit, then the mapping torus~$X\ho\phi$ is a circle. Therefore, or by inspection of the long exact sequence, we see that the claim is true for finite orbits, too. In general, any~$G$-set is the disjoint union of its orbits, and the homology of a disjoint union is the direct sum of the homologies. This proves the claim in general.
\end{proof}

This result is the reason why finite and infinite orbits play fundamentally different roles in the  homology of permutation racks.


\section{The spectral sequence: construction and first applications}\label{eq:spectral_sequence}

Finally, we can now address the following question: Given a set~$X$ with a permutation~$\phi$, how can we compute the cohomology of the associated permutation rack~$(X,\phi)$? Our answer follows the blueprint given by Quillen in~\cite[Sec.~8]{Quillen:summary}. He computes the associative algebra homology of commutative algebras from the knowledge of the associative algebra homology of {\it free} commutative algebras~(polynomial rings). Here we adapt his methods and deduce the homology of permutation racks from the homology of{\it~free} permutation racks via a spectral sequence.

\begin{theorem}\label{thm:ss}
For every permutation rack~$(X,\phi)$, there is a spectral sequence of homological type which has its~$\rmE^2$ page given as
\[
\rmE_{\bullet,q}^2\cong\overline{\rmH}_\bullet(X\ho\phi)^{\otimes(q-1)}\otimes\rmH_\bullet(X\ho\phi)
\]
and which abuts to the rack homology of~$(X,\phi)$. 
\end{theorem}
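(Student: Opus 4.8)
The plan is to build the spectral sequence as the one associated to a filtered chain complex, where the filtration comes from a resolution of $(X,\phi)$ by free permutation racks. First I would pick a free simplicial resolution $F_\bullet \to X$ in the category of $\bbZ$-sets, for instance $F_\bullet = \rmE\bbZ_\bullet \times X$ from the bar construction, so that each $F_q$ is a free $\bbZ$-set, i.e.\ a free permutation rack, and the augmentation $F_\bullet \to X$ is an equivalence. Applying the rack chain complex functor $\CR_\bullet(-)$ levelwise yields a simplicial chain complex $q \mapsto \CR_\bullet(F_q,\phi)$, and I would form its total complex (or the associated bicomplex with the simplicial direction contributing the Moore/normalized differential). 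The key input that makes this resolve the rack homology of $(X,\phi)$ is that $\CR_\bullet(-)$ sends the levelwise equivalence $F_\bullet \to X$ to an equivalence on totalizations; this is the homotopy-invariance property of Quillen homology established in \cite{Szymik:Quillen}, and I would invoke it to identify the homology of the total complex with $\HR_\bullet(X,\phi)$.

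Next I would run the standard spectral sequence of this bicomplex, taking homology first in the internal (rack) direction. By Theorem~\ref{thm:free}, for each fixed simplicial degree $q$ we have $\HR_\bullet(F_q,\phi) \cong \overline{\bbZ S_q}^{\otimes(\bullet-1)} \otimes \bbZ S_q$ in positive degrees (and $\bbZ$ in degree $0$), where $S_q = F_q/\phi$. Since $F_\bullet$ is a free resolution, the simplicial set $S_\bullet = F_\bullet/\phi$ is a model for $X\ho\phi$, so $\bbZ S_\bullet$ computes $\rmH_\bullet(X\ho\phi)$ and $\overline{\bbZ S_\bullet}$ computes the reduced homology $\overline{\rmH}_\bullet(X\ho\phi)$. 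Thus taking homology in the remaining simplicial direction, the tensor-power expression $\overline{\bbZ S_q}^{\otimes(q'-1)} \otimes \bbZ S_q$ — which is a functor of the free abelian group $\bbZ S_q$ built out of tensor products — should, after passing to homology, give $\overline{\rmH}_\bullet(X\ho\phi)^{\otimes(q'-1)} \otimes \rmH_\bullet(X\ho\phi)$, at least once we know these homology groups are free abelian (which they are, by Proposition~\ref{prop:Borel_homology}: they are $\bbZ S$ and $\bbZ S_\fin$, both free). Re-indexing so that the tensor-power parameter becomes the filtration degree $q$ gives exactly the stated $\rmE^2$ page, and the convergence to $\HR_\bullet(X,\phi)$ is the standard convergence of the first-quadrant spectral sequence of a bicomplex.

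The main obstacle I expect is the second step: commuting the tensor-power construction past the passage to homology in the simplicial direction. Concretely, one needs that the homology of the simplicial abelian group $q \mapsto \overline{\bbZ S_q}^{\otimes(p-1)} \otimes \bbZ S_q$ is the tensor product of the homologies of the factors. This is a Künneth-type statement, and it works cleanly precisely because all the relevant homology groups $\rmH_\bullet(X\ho\phi)$ are free abelian (no $\Tor$ terms), so the Eilenberg--Zilber theorem plus the algebraic Künneth theorem apply without correction terms; I would spell this out carefully, since the naive Künneth theorem fails for rack homology itself (as the remark after Theorem~\ref{thm:free} warns), and the point is that here we are only using the ordinary Künneth theorem for chain complexes of free abelian groups, not any Künneth principle for racks. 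A secondary technical point is to make sure the internal differential $\rmd$ on $\CR_\bullet(F_q,\phi)$ is compatible with the simplicial structure maps well enough to define the bicomplex — this is automatic from functoriality of $\CR_\bullet$ applied to the simplicial object $F_\bullet$, but it is worth a sentence. Once these are in place, degeneracy is a separate matter handled later (Theorem~\ref{thm:degeneracy}), so the present statement is complete.
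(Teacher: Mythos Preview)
Your proposal is correct and follows essentially the same approach as the paper: both build the double complex $\CR_q(F_p)$ from a free simplicial resolution $F_\bullet\to X$, identify the target with $\HR_\bullet(X,\phi)$ via \cite{Szymik:Quillen}, compute the $\rmE^1$ page row-wise using Theorem~\ref{thm:free}, and then obtain the stated $\rmE^2$ page via Eilenberg--Zilber and the algebraic K\"unneth theorem (which applies cleanly because the equivariant homology groups are free by Proposition~\ref{prop:Borel_homology}). The only minor difference is that the paper makes the identification of the abutment explicit by running the \emph{other} spectral sequence of the double complex (horizontal direction first, using that $\CR_q(F_\bullet)$ resolves $\CR_q(X)$), whereas you invoke homotopy invariance as a black box; both amount to the same citation.
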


The homology groups on the right hand side are the ones computed in Proposition~\ref{prop:Borel_homology}. We'll spell out the consequences after the proof.

\begin{proof}
We choose a simplicial resolution~$X\leftarrow F_\bullet$ by free permutations, such as~\eqref{eq:resolution} in Section~\ref{sec:Borel}. Then we apply the rack chain complex functor~$\CR_\bullet(?)$ to it. This gives a simplicial chain complex~$\CR_\bullet(F_\bullet)$. The Moore construction, which turns a simplicial abelian group into a chain complex with differential~$\partial=\sum_j(-1)^j\partial_j$, turns this into a double complex
\[
\rmE_{p,q}^0=\CR_q(F_p).
\]
This double complex comes with two spectral sequences that converge to the same target: the homology of the totalization. We will inspect these in order.

First, let us consider the spectral sequence that computes the differential in the horizontal~$p$--direction first. For a fixed~$q$, Lemma 2.3 in~\cite{Szymik:Quillen} says that the Moore complex~$\CR_q(F_\bullet)$ is a free resolution of the abelian group~$\CR_q(X)$. It follows that we get
\[
\rmE_{p,q}^1\cong
\begin{cases}
\CR_q(X) &\text{ if } p=0,\\
0 &\text{ if } p\not=0.
\end{cases}
\]
The vertical differential is the one coming from the rack complex, by naturality, so that we get
\[
\rmE_{p,q}^2\cong
\begin{cases}
\HR_q(X) &\text{ if }  p=0,\\
0 &\text{ if } p\not=0.
\end{cases}
\]
Since this is concentrated in the~$0$--th column, the spectral sequence necessarily degenerates from~$\rmE^2$ on. This shows that the target of this~(and the other!) spectral sequence is the rack homology of~$X$.

Second, let us consider the spectral sequence that computes the differential in the vertical~$q$--direction first. By now, we already know that its target is the rack homology of~$X$, but we haven't given a useful description of the initial terms, yet. The starting page is
\[
\rmE_{p,q}^0=\CR_q(F_p)
\]
as before. If we fix~$p$, and compute the vertical differential, we get
\[
\rmE_{p,q}^1=\HR_q(F_p),
\]
the rack homology of the free racks~$F_p$. We have already computed this in Section~\ref{sec:free}. If we let~$S_p$ be the set of orbits of~$F_p$, then Theorem~\ref{thm:free} gives a natural isomorphism
\[
\HR_q(F_p)\cong\overline{\bbZ S}_p^{\otimes(q-1)}\otimes\bbZ S_p.
\]
Naturality implies that, for a fixed degree~$q$, we have an isomorphism
\begin{equation}\label{eq:simplicial_abelian_groups}
\HR_q(F_\bullet)\cong\overline{\bbZ(X\ho\phi)}^{\otimes(q-1)}\otimes\bbZ(X\ho\phi)
\end{equation}
of simplicial abelian groups, where~$X\ho\phi$ is the simplicial set of orbits of~$F_\bullet$. It follows that the~$q$--th row of the~$\rmE^1$ page of the spectral sequences is the Moore complex of the right hand side of~\eqref{eq:simplicial_abelian_groups}. Up to equivalence, the Moore complex commutes with tensor products. The Moore complex for~$\bbZ(X\ho\phi)$ computes the homology of~$X\ho\phi$, and the Moore complex for~$\overline{\bbZ(X\ho\phi)}$ computes the{\it~reduced} homology of~$X\ho\phi$. Both can be read off immediately from our Proposition~\ref{prop:Borel_homology}. Since all of these homologies are free, the K\"unneth theorem implies that we have an isomorphism
\[
\rmE_{\bullet,q}^2\cong\overline{\rmH}_\bullet(X\ho\phi)^{\otimes(q-1)}\otimes\rmH_\bullet(X\ho\phi),
\]
as claimed.
\end{proof}

It follows from Proposition~\ref{prop:Borel_homology} that we have 
$\rmE_{p,q}^2=0$ whenever~$p>q$. This implies that all the differentials involving the~$\rmE_{p,q}^2$ with~$p+q\leqslant 2$ are zero. Since all of these abelian groups are also free, again by Proposition~\ref{prop:Borel_homology}, there are no extension problems, and we get
\begin{align*}
\HR_0(X,\phi)&\cong\bbZ\\
\HR_1(X,\phi)&\cong\rmH_0(X\ho\phi)\\
	&\cong\bbZ S\\
\HR_2(X,\phi)&\cong\Big(\overline{\rmH}_0(X\ho\phi)\otimes\rmH_0(X\ho\phi)\Big)\oplus\rmH_1(X\ho\phi)\\
	&\cong(\overline{\bbZ S}\otimes\bbZ S)\oplus\bbZ S_\fin,
\end{align*}
where~$S$ is the set of orbits of~$\phi$ on~$X$, and~$S_\fin$ is the subset of finite orbits. Theorem~\ref{thm:C} follows.

\begin{remark}\label{R:StrGroup}
Another important invariant of a rack~$(X,\rhd)$ is its~\emph{structure group}, given by the presentation
\[
G{(X,\rhd)} =\langle g_a,\, a\in X\,|\, g_a g_b = g_{a\rhd b} g_a,\, a,b\in X\rangle.
\]
This is the value of the left-adjoint to the forgetful functor from the category of groups to the category of racks which sends a group to its conjugation rack. It is easy to compute the structure group of a permutation rack~$(X,\phi)$: it is the free abelian group on the set of orbits of~$\phi$. (Indeed, the defining relation reads~\hbox{$g_a g_b = g_{\phi(b)} g_a$} for such a permutation rack. Choosing~$a=b$, we obtain~$g_b g_b = g_{\phi(b)} g_b$, hence~$g_b  = g_{\phi(b)}$ for all~$b$ in~$X$. Therefore, a set of orbit representatives suffices to generate the group~$G(X,\phi)$. Inserting the equation~$g_b  = g_{\phi(b)}$ back into the relation, we get~$g_a g_b = g_{b} g_a$, hence~$G(X,\phi)$ is abelian.) In particular, up to isomorphism, the group~$G(X,\phi)$ depends on the number of orbits of~$\phi$ only. According to our preceding computations of~$\HR_2(X,\phi)$, two permutation racks with the same (finite) number of orbits but different numbers of finite orbits then have isomorphic structure groups but non-isomorphic homology groups. Hence in this case homology turns out to be a stronger invariant.
\end{remark}

\begin{example}\label{ex:free_spec_seq}
If~$X$ is a free permutation rack, then~$\rmH_\bullet(X\ho\phi)$ is concentrated in degree~$0$, and so the spectral sequence is concentrated in the~$0$--th column from~$\rmE^2$ on. Then it degenerates, and we recover the result of Theorem~\ref{thm:free}. Of course, we have used that result already when setting up the spectral sequence, and this example only serves as a consistency check.
\end{example}

\section{The spectral sequence degenerates for finite racks}\label{sec:degeneracy_fin}

We now turn to a more substantial example and prove Theorem~\ref{thm:D}.

\begin{theorem}\label{thm:finite}
Let~$(X,\phi)$ be a finite permutation rack. Then the spectral sequence degenerates from its~$\rmE^2$ page on, and the rack homology~$\HR_n(X,\phi)$ is a free abelian group of rank~$r^n$, where~$r=|X/\phi|$ is the number of orbits of~$\phi$ acting on~$X$.
\end{theorem}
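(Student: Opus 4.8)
The plan is to exploit the functional equation satisfied by the Poincaré series of the $\rmE^2$ page together with the a priori lower bound on the Betti numbers coming from the rational computations of Etingof--Graña~\cite{EtGr}. Since $(X,\phi)$ is finite, all orbits are finite, so by Proposition~\ref{prop:Borel_homology} the equivariant homology is $\rmH_0(X\ho\phi)\cong\bbZ S$ and $\rmH_1(X\ho\phi)\cong\bbZ S$, both free of rank $r$, with reduced homology $\overline{\rmH}_0\cong\overline{\bbZ S}$ of rank $r-1$ in degree $0$ and rank $r$ in degree $1$. First I would compute the bigraded Poincaré series of the $\rmE^2$ page from Theorem~\ref{thm:ss}: writing $\rmE_{\bullet,q}^2\cong\overline{\rmH}_\bullet(X\ho\phi)^{\otimes(q-1)}\otimes\rmH_\bullet(X\ho\phi)$, each reduced tensor factor contributes a polynomial $(r-1)+rT$ in the internal grading variable $T$ and the final factor contributes $1+rT$ (the $1$ from $\rmH_0$, the $rT$ from $\rmH_1$; here $T$ records total homological degree $p+q$, so a class in bidegree $(p,q)$ contributes $T^{p+q-q}=T^p$ from the internal part and the row index $q$ contributes nothing extra since we only track total degree). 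Summing over $q\geqslant 1$ gives the generating function
\[
P_{\rmE^2}(T)=\sum_{q\geqslant 1}\bigl((r-1)+rT\bigr)^{q-1}(1+rT)T^{?}
\]
— the bookkeeping of which grading variable tracks $q$ versus total degree needs to be set up carefully, but the upshot is a rational function whose denominator is $1-\bigl((r-1)+rT\bigr)$ up to the shift accounting for the extra grading, i.e. something of the shape $\dfrac{1+rT}{1-(r-1)T-r_\fin T^2}$ specialized to $r_\fin=r$.

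Second, I would observe that the $\rmE^\infty$ page, hence $\HR_\bullet(X,\phi)$, has Betti numbers bounded above coefficient-wise by those of $\rmE^2$, since the spectral sequence can only kill classes. Third, I would invoke~\cite{EtGr} for the lower bound: the rational rack homology of a finite permutation rack has Poincaré series $\dfrac{1}{1-rT}$... — here is where I must be careful: the claimed answer $\HR_n\cong\bbZ S^{\otimes n}$ has Poincaré series $\dfrac{1}{1-rT}$, so I want to show both that the $\rmE^2$-upper bound equals $\dfrac{1}{1-rT}$ and that~\cite{EtGr} forces this lower bound. So the key identity to verify is that the collapsed $\rmE^2$ total-degree Poincaré series, $\dfrac{1+rT}{1-(r-1)T-rT^2}$, equals $\dfrac{1}{1-rT}$; indeed $(1+rT)(1-rT)=1-r^2T^2$ while $1-(r-1)T-rT^2$ times... — one checks directly that $\dfrac{1+rT}{1-(r-1)T-rT^2}=\dfrac{1}{1-rT}$ since $(1+rT)(1-rT)=1-(r-1)T-rT^2-(rT^2-... )$... the cleaner route is: $1-(r-1)T-rT^2=(1-rT)(1+T)$, and $1+rT\ne 1+T$ unless $r=1$, so this factorization route shows the series is $\dfrac{1+rT}{(1-rT)(1+T)}$, which is \emph{not} $\dfrac{1}{1-rT}$ in general. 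This tension is exactly the point: the degeneracy is \emph{not} automatic, and I expect the honest argument to be that the $\rmE^2$ upper bound strictly exceeds the $\cite{EtGr}$ lower bound \emph{before} collapse, yet the constraint that differentials must be consistent with the known answer in total degrees $\leqslant 2$ (computed after Theorem~\ref{thm:ss}) together with a Poincaré-series matching forces all differentials to vanish.

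The cleanest execution, and the one I would actually pursue: show the $\rmE^2$ Poincaré series in the single variable $T$ tracking total degree is $\dfrac{1+rT}{1-(r-1)T-rT^2}$; show via~\cite{EtGr} that $\rank\HR_n(X,\phi)\geqslant r^n$; show that $\dfrac{1}{1-rT}$ is dominated coefficient-wise by $\dfrac{1+rT}{1-(r-1)T-rT^2}$ with equality forced only if all differentials vanish — this last step is the main obstacle, and I expect it to require a genuine argument rather than pure generating-function manipulation, because the two series do \emph{not} coincide. Concretely, I would instead argue: the spectral sequence differentials $d^k$ for $k\geqslant 2$ go from $\rmE_{p,q}$ to $\rmE_{p-k,q+k-1}$, hence from total degree $p+q$ to total degree $p+q-1$; the alternating sum $\sum_n(-1)^n\rank\rmE^2_{\text{tot}=n}$ is invariant under passing to $\rmE^\infty$, giving one linear relation, and comparing with the Euler characteristic of $\bbZ S^{\otimes\bullet}$ pins things down in each degree once combined with the lower bound. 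The real content is that any nonzero differential would drop some Betti number \emph{below} $r^n$, contradicting~\cite{EtGr}; this requires knowing that the $\rmE^2$ columns are "efficiently" arranged so that no cancellation among differentials can preserve the total rank in every degree while still being nonzero — this rigidity is where the finiteness hypothesis (all orbits finite, so $\overline{\rmH}_\bullet$ concentrated in degrees $0,1$) does the work, bounding the spectral sequence to the strip $p\leqslant q$ and making the differential pattern combinatorially rigid. I would close by noting freeness of $\HR_\bullet$ follows since every $\rmE^2$ term is free abelian and, post-degeneracy, there are no extension problems.
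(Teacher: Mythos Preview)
Your overall strategy matches the paper's: compute the total-degree Poincar\'e series of the $\rmE^2$ page, compare with the Etingof--Gra\~na bound $r^n$, and conclude degeneracy. But you make a computational slip that derails the argument, and then you spend the rest of the proposal trying to repair a problem that isn't there.

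The slip is in the Poincar\'e series of the unreduced factor $\rmH_\bullet(X\ho\phi)$. You write that it contributes $1+r\rmT$, attributing ``the $1$'' to $\rmH_0$. But $\rmH_0(X\ho\phi)\cong\bbZ S$ has rank $r$, not $1$; the correct contribution is $r+r\rmT$. If you set $f(\rmT)=(r-1)+r\rmT$ for the reduced factor and $g(\rmT)=r+r\rmT$ for the unreduced one, then the $q$-th row ($q\geqslant 1$) has Poincar\'e series $f(\rmT)^{q-1}g(\rmT)$ in the $p$-variable, and summing with the $\rmT^q$ shift (plus the constant $1$ from $q=0$) gives
\[
1+\sum_{q\geqslant 1}f(\rmT)^{q-1}g(\rmT)\rmT^q
=\frac{1+(g(\rmT)-f(\rmT))\rmT}{1-f(\rmT)\rmT}
=\frac{1+\rmT}{1-(r-1)\rmT-r\rmT^2},
\]
since $g-f=1$. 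Now the denominator factors as $(1+\rmT)(1-r\rmT)$, and the $(1+\rmT)$ cancels, leaving exactly $1/(1-r\rmT)=\sum r^n\rmT^n$.

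So the $\rmE^2$ upper bound and the Etingof--Gra\~na lower bound \emph{coincide} in every degree, and degeneracy is immediate: any nonzero differential would drop the rank of some $\rmE^\infty$ term strictly below $r^n$. All of your discussion after the point where you discover the (spurious) tension---the Euler characteristic invariance, the ``combinatorial rigidity'' of the differential pattern---is unnecessary. The argument is precisely the clean one you were hoping for before the arithmetic went wrong.
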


\begin{proof}
We inspect the~$\rmE^2$ page of the spectral sequence in Theorem~\ref{thm:ss}. We have a free abelian group in every bidegree~$(p,q)$, hence in every total degree~$n$. We will now show that the rank of the~$\rmE^2$ page in total degree~$n$ is~$r^n$.

Let~$f(\rmT)$ be the Poincar\'e series of~$\overline{\rmH}_{\bullet}(X\ho\phi)$. If~$r$ is the number of orbits, then we have
\begin{equation}\label{eq:f}
f(\rmT)=(r-1)+r\,\rmT
\end{equation}
by Proposition~\ref{prop:Borel_homology}. This satisfies the functional equation
\begin{equation}\label{eq:functional_equation}
1-f(\rmT)\rmT=(1+\rmT)(1-r\,\rmT),
\end{equation}
as is straightforward to verify from~\eqref{eq:f}. Note that we used that all orbits are finite; otherwise the Poincar\'e series~$f(\rmT)$ is a bit more complicated. From the description of the~$q$--th row of the~$\rmE^2$ page of the spectral sequence, we get that its Poincar\'e series is~$1$ for~$q=0$ and~\hbox{$f(\rmT)^q+f(\rmT)^{q-1}$} for all~\hbox{$q\geqslant 1$}, regardless of the orbit structure. Therefore, we have
\begin{align*}
\sum_{n\geqslant0}\Big(\sum_{p+q=n}\rank(\rmE^2_{p,q})\Big)\rmT^n
&=\sum_{p,q\geqslant0}\rank(\rmE^2_{p,q})\rmT^{p+q}\\
&=\sum_{q\geqslant0}\Big(\sum_{p\geqslant0}\rank(\rmE^2_{p,q})\rmT^p\Big)\rmT^q\\
&=1+\sum_{q\geqslant1}\Big(f(\rmT)^q+f(\rmT)^{q-1}\Big)\rmT^q\\
&=\sum_{q\geqslant0}f(\rmT)^q\rmT^q+\sum_{q\geqslant0}f(\rmT)^q\rmT^{q+1}\\
&=(1+\rmT)\sum_{q\geqslant0}f(\rmT)^q\rmT^q\\
&=\frac{1+\rmT}{1-f(\rmT)\rmT}.
\end{align*}
In our specific situation, we can use the functional equation~\eqref{eq:functional_equation} to see that this equals
\[
\frac1{1-r\,\rmT}=\sum_{n\geqslant0}r^n\rmT^n,
\]
as claimed.

It is known from the work~\cite[Cor.~4.3]{EtGr} of Etingof and Gra\~na that the~$n$--th Betti number of a finite permutation rack with~$r$ orbits is~$r^n$. So~$r^n$ is the rank of the~$\rmE^\infty$ page in total degree~$n$ as well. Given that all the groups on the~$\rmE^2$ page are free abelian, if the spectral sequence would not degenerate from its~$\rmE^2$ page on, the rank of the~$\rmE^\infty$ page would be strictly less than that of the~$\rmE^2$ page, contradicting what we have shown above. Therefore the spectral sequence does degenerate from its~$\rmE^2$ page on. Since~$\rmE^\infty\cong\rmE^2$ is free abelian, there are no extension problems, and we find that~$\HR_n(X,\phi)$ is the totalization of it, which we have just shown to be of rank~$r^n$ in total degree~$n$. 
\end{proof}

\begin{remark}\label{rem:infinite}
It is easy to generalize Theorem~\ref{thm:finite} slightly to cover all permutation racks~$(X,\phi)$ for permutations~$\phi$ without free orbits. These are the unions of their finite subracks, and homology is compatible with unions. It follows that the rack homology~$\HR_\bullet(X,\phi)$ of these racks is always free abelian in each degree.
\end{remark}


\section{The spectral sequence always degenerates}\label{sec:degeneracy}

This final section completes the proof of Theorem~\ref{T:A} by establishing the following result.

\begin{theorem}\label{thm:degeneracy}
For any permutation rack~$(X,\phi)$, the spectral sequence in Theorem~\ref{thm:ss} degenerates from its~$\rmE^2$ page on.
\end{theorem}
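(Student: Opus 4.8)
The strategy is to prove degeneracy by a dimension count, exactly as in the finite case of Theorem~\ref{thm:finite}, but now using the general functional equation satisfied by the Poincar\'e series of~$\overline{\rmH}_\bullet(X\ho\phi)$ together with the already-established lower bound on the homology coming from the rational computation in~\cite{EtGr}. So the first step is to record the Poincar\'e series~$f(\rmT)$ of~$\overline{\rmH}_\bullet(X\ho\phi)$ in the general case: by Proposition~\ref{prop:Borel_homology} we get~$f(\rmT)=(r-1)+r_\fin\rmT$ when the number~$r$ of orbits and the number~$r_\fin$ of finite orbits are finite (and one argues by a colimit/compatibility-with-unions argument otherwise, as in Remark~\ref{rem:infinite}, or handles the infinite-rank case by reducing to finite subracks). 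The key algebraic identity to verify is the functional equation
\[
1-f(\rmT)\rmT=(1+\rmT)\bigl(1-(r-1)\rmT-r_\fin\rmT^2\bigr),
\]
which generalizes~\eqref{eq:functional_equation} and is an immediate computation from the expression for~$f(\rmT)$.

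Next I would run the same bookkeeping as in the proof of Theorem~\ref{thm:finite}: the Poincar\'e series of the~$q$--th row of the~$\rmE^2$ page is~$1$ for~$q=0$ and~$f(\rmT)^q+f(\rmT)^{q-1}$ for~$q\geqslant1$ by the description of the~$\rmE^2$ page in Theorem~\ref{thm:ss} together with Proposition~\ref{prop:Borel_homology}; summing over~$q$ with the extra factor~$\rmT^q$ for the horizontal grading gives that the total Poincar\'e series of the~$\rmE^2$ page equals
\[
\frac{1+\rmT}{1-f(\rmT)\rmT}=\frac{1+\rmT}{(1+\rmT)\bigl(1-(r-1)\rmT-r_\fin\rmT^2\bigr)}=\frac{1}{1-(r-1)\rmT-r_\fin\rmT^2}.
\]
This is an \emph{upper} bound for the Poincar\'e series of~$\HR_\bullet(X,\phi)$, since all groups on the~$\rmE^2$ page are free abelian (Proposition~\ref{prop:Borel_homology}) and a differential can only decrease ranks. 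The final step is to match this with a lower bound: one invokes \cite[Cor.~4.3]{EtGr}, suitably interpreted, to see that the~$n$--th rational Betti number of~$(X,\phi)$ is exactly the coefficient of~$\rmT^n$ in~$1/\bigl(1-(r-1)\rmT-r_\fin\rmT^2\bigr)$ --- equivalently, the Betti numbers satisfy the recursion~$\beta_n=(r-1)\beta_{n-1}+r_\fin\beta_{n-2}$ for~$n\geqslant2$ with~$\beta_0=1$, $\beta_1=r$. Since the~$\rmE^\infty$ page computes~$\HR_\bullet(X,\phi)$ and its total rank in each degree is therefore forced to equal that of the~$\rmE^2$ page, no nontrivial differential can occur, and freeness of~$\rmE^\infty\cong\rmE^2$ rules out extension problems; this also yields the Poincar\'e series and the recursion stated in the introduction.

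The main obstacle is the lower bound: I need the rational Betti numbers of an \emph{arbitrary} permutation rack, whereas \cite{EtGr} is phrased for finite racks. Two routes present themselves. One is to observe that an arbitrary permutation rack is a filtered colimit of its finitely generated sub-permutation-racks (each of which has finitely many orbits, some finite and some free), that rack homology commutes with such colimits, and that the Betti-number formula behaves correctly under the colimit --- but one must be careful that ``free orbit'' versus ``finite orbit'' is not a colimit-stable distinction for a \emph{fixed} sub-rack, so the bookkeeping of~$r$ and~$r_\fin$ needs attention. The cleaner route, which I would actually pursue, is to combine the two cases already fully settled in the paper: Theorem~\ref{thm:free} handles the all-infinite-orbit case and Theorem~\ref{thm:finite}/Remark~\ref{rem:infinite} handles the all-finite-orbit case, and a general~$(X,\phi)$ splits as a disjoint union~$X=X_{\mathrm{free}}\sqcup X_{\mathrm{fin}}$ of its union of infinite orbits and its union of finite orbits. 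Rack homology of a disjoint union of racks is \emph{not} simply a sum, but the corresponding statement for the associated spaces and for the~$\rmE^2$ pages lets one express the Poincar\'e series of~$(X,\phi)$ as a product of the two known Poincar\'e series; verifying that
\[
\frac{1+\rmT}{1-(r_{\mathrm{free}}-1)\rmT}\cdot\frac{1}{1-(r_\fin-1)\rmT-\cdots}
\]
collates into~$\dfrac{1+\rmT}{1-(r-1)\rmT-r_\fin\rmT^2}$ is the routine-but-delicate check. Either way, once the matching lower bound is in hand the degeneracy is immediate, so the real content of the theorem is this Poincar\'e-series identity rather than any spectral-sequence manipulation.
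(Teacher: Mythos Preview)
Your overall architecture---compute the Poincar\'e series of the~$\rmE^2$ page as an upper bound and match it against a lower bound on the rational Betti numbers---is exactly the paper's strategy, but there are two genuine problems.

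First, a computational error: your claimed functional equation
\[
1-f(\rmT)\rmT=(1+\rmT)\bigl(1-(r-1)\rmT-r_\fin\rmT^2\bigr)
\]
is false in general. With~$f(\rmT)=(r-1)+r_\fin\rmT$ one simply has~$1-f(\rmT)\rmT=1-(r-1)\rmT-r_\fin\rmT^2$, with no~$(1+\rmT)$ factor; the special factorization~\eqref{eq:functional_equation} only works when~$r_\fin=r$. Consequently the~$\rmE^2$ Poincar\'e series is~$\dfrac{1+\rmT}{1-(r-1)\rmT-r_\fin\rmT^2}$, not your~$\dfrac{1}{1-(r-1)\rmT-r_\fin\rmT^2}$.

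Second, and more seriously, neither of your proposed routes to the lower bound works. The result of~\cite{EtGr} genuinely requires finiteness and does not yield the Betti numbers of a rack with free orbits. Your colimit route fails for exactly the reason you flag. And your product formula is simply false: take~$X$ to be the disjoint union of one fixed point and one free orbit, so~$r=2$, $r_\fin=1$. The Poincar\'e series of the pieces are~$\frac{1}{1-\rmT}$ and~$1+\rmT$, whose product is~$\frac{1+\rmT}{1-\rmT}$, whereas the correct answer is~$\frac{1+\rmT}{1-\rmT-\rmT^2}$. Rack homology of a disjoint union of permutation racks does not decompose multiplicatively in this way.

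The paper obtains the lower bound by an entirely different mechanism that you have not anticipated. It first replaces~$(X,\phi)$ by a \emph{semi-free} quotient~$(\overline X,\overline\phi)$ obtained by collapsing each finite orbit to a fixed point, and checks (rationally) that this induces an isomorphism of spectral sequences from~$\rmE^2$ on. Then, in the semi-free rack, it uses Propositions~\ref{prop:morphism} and~\ref{prop:fixed} to write down explicit families~$B_n$ of cycles---products of factors~$(q-q_*)$ and~$t^2$ with~$t$ a fixed point---whose cardinalities satisfy the same recursion as the~$\rmE^2$ Betti numbers, and proves their linear independence by analyzing their images in~$\bbZ S^n$ via a maximal-monomial argument. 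This explicit construction of enough homology classes is the real content of the theorem, and it is what your proposal is missing.
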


\begin{proof}
As in Remark~\ref{rem:infinite}, it is clear that we can assume that the permutation~$\phi$ on~$X$ has only{\it~finitely many orbits}: any differential involves only finitely many of them. 

Under this assumption, the~$\rmE^2$ page consists of finitely generated free abelian groups in each bidegree. Therefore, as in our proof of Theorem~\ref{thm:finite}, it is sufficient to demonstrate the degeneracy with{\it~rational coefficients}: this is enough to detect non-zero differentials between finitely generated free abelian groups. For the rest of the proof, we can and will assume that~$(X,\phi)$ is a permutation rack with finitely many orbits, and all homology will be with rational coefficients.

We construct a new permutation rack~$(\overline{X},\overline{\phi})$ from~$(X,\phi)$ as a quotient, by collapsing each finite orbit onto a single fixed point, leaving the free orbits as they were. The new permutation rack has the same number of orbits and of finite orbits, but it is{\it~semi-free}: the complement of the fixed point set is free. The quotient map~$(X,\phi)\to(\overline{X},\overline{\phi})$ is a morphism of (permutation) racks.

We claim that this morphism induces an isomorphism of spectral sequences from the~$\rmE^2$ pages on. To justify this claim, note that the~$\rmE^2$ pages are given in terms of the equivariant homology of the homotopy orbit spaces~$X\ho\phi$ and~$\overline{X}\ho\overline{\phi}$, respectively. The induced map~$X\ho\phi\to\overline{X}\ho\overline{\phi}$ between the homotopy orbit spaces is as follows: The components coming from free orbits are contractible, so any map is a homotopy equivalence. The components coming from the finite orbits are equivalent to circles, and the collapse of an orbit of length~$n\geqslant1$ to a single fixed point induces an~$n$--fold covering. Both maps yield isomorphisms on the level of rational homology. Together with naturality, this proves our claim: we have a morphism of spectral sequences that is an isomorphism from their~$\rmE^2$ pages on.

After all this d\'evissage, we are left to deal with a semi-free permutation rack~$(X,\phi)$ with finitely many orbits.
 We can compute the Poincar\'e series of the~$\rmE^2$ page as in the proof of Theorem~\ref{thm:finite}:
\begin{equation*}
\sum_{n\geqslant0}\Big(\sum_{p+q=n}\rank(\rmE^2_{p,q})\Big)\rmT^n=\frac{1+\rmT}{1-f(\rmT)\rmT}.
\end{equation*}
In contrast to Theorem~\ref{thm:finite}, we now have
\begin{equation*}
f(\rmT)=(r-1)+r_\fin\rmT,
\end{equation*}
where~$r$ is the number of orbits, and~$r_\fin$ is the number of finite orbits. Therefore, the Poincar\'e series of the~$\rmE^2$ page equals
\begin{equation*}
\frac{1+\rmT}{1-(r-1)\rmT-r_\fin\rmT^2}.
\end{equation*}
Equivalently, the Betti numbers~$\beta_n=\sum_{p+q=n}\rank(\rmE^2_{p,q})$ can be computed from the following recursion formula:
\begin{align}
\beta_0 &= 1,\qquad\qquad
\beta_1 = r,\label{E:Betti}\\
\beta_{n+2} &= (r-1)\beta_{n+1} + r_\fin\beta_n,\qquad n\geqslant 0.\label{E:Betti2}
\end{align}

We thus obtain an upper bound on the Poincar\'e series of the~$\rmE^\infty$ page, which agrees with the Poincar\'e series of the homology~$\HR_\bullet(X,\phi)$. We claim that this bound is sharp: they are equal. Once we have established that claim, the result follows because any non-zero differential would contradict the equality of the Poincar\'e series.

To prove the claim, we will use Propositions~\ref{prop:morphism} and~\ref{prop:fixed} to produce a sufficient supply of homology classes in~$\HR_\bullet(X,\phi)$. Choose a set~$Q\subseteq X$ of orbit representatives. Let~$F\varsubsetneq Q$ denote the set of fixed points. (The case~$Q=F$ was treated in Section~\ref{sec:degeneracy_fin}). Choose any element~\hbox{$q_*\in Q\setminus F$}. Define subsets~\hbox{$B_n\subseteq\CR_\bullet(X,\phi)$} inductively by
\begin{align*}
B_0 &=\{\,1\,\},\\
B_1 &=Q,\\
B_n &=\{\,(q-q_*) b_1\,|\,q\in Q\setminus\{q_*\},\,b_1\in B_{n-1}\,\}\cup\{\,(fq_*+q_*f-q_*q_*) b_2\,|\,f\in F,\,b_2\in B_{n-2}\,\}.
\end{align*}
By construction, the cardinalities~$|B_n|$ satisfy the same recursive formula as the Betti numbers~$\beta_n$ above, hence~$|B_n|=\beta_n$ for all~$n$. Moreover, since 
\begin{equation}\label{E:BnB'n}
fq_*+q_*f -q_*q_*= f\!f-(f-q_*)(f-q_*),
\end{equation}
 Propositions~\ref{prop:morphism} and~\ref{prop:fixed} allow us to move the differential~$\rmd$ through a chain~$b\in B_n$ all the way to the right, and show that~$b$ is a cycle. It remains to check that these cycles induce linearly independent homology classes in each~$\HR_n(X,\phi)$. We will actually prove slightly more: the images of these homology classes in~$\HR_n(S,\id)=(\bbZ S)^{\otimes n}$ under the map~\eqref{eq:test} are linearly independent. Identifying the orbit set~$S=X/\phi$ with the set of orbit representatives~$Q$, we reduce our task to proving linear independence in the free abelian group~$(\bbZ Q)^{\otimes n} =\bbZ (Q^n)$. 
 
Consider any order on the set~$Q$ of orbit representatives that has~$q_*$ as its minimal element and extend it anti-lexicographically to the basis~$Q^n$ of~$\bbZ (Q^n)$. By construction, any element~$b\in B_n$ is a product of some terms of the form~\hbox{$(q-q_*)$}, and some terms of the form~\hbox{$(fq_*+q_*f-q_*q_*)$}, possibly with another element of~$Q$ appended on the right. As any element~$b$ can also be written, uniquely, as a linear combination of monomials from~$Q^n$, let us consider the maximal monomial, say~\hbox{$m=m(b)$}, appearing in this linear combination; it is obtained from~$b$ by selecting the~$q$ from the terms~\hbox{$(q-q_*)$}, and the~$q_*f$ from~\hbox{$(fq_*+q_*f-q_*q_*)$}. On the other hand, per our design, the element~$b$ is uniquely reconstructible from its maximal monomial~$m$: Indeed, the chosen minimal element~$q_*$ can appear in~$m$ either on the right, or followed by some~$f\in F$. Replacing each subword of the form~$q_*f$ in~$m$ with~\hbox{$(fq_*+q_*f-q_*q_*)$}, and then each letter~$q$ that remained untouched, except for the rightmost letter of~$m$, with~\hbox{$(q-q_*)$}, we recover the element~$b$ from~$m$. This shows that the matrix of coefficients with respect to the chosen order is triangular, and the elements are linearly independent in~$\bbZ(Q^n)$, and this proves our claim.
\end{proof}

\begin{remark}
When we need an explicit description of the homology classes of semi-free permutation racks, then instead of~$B_n$ we can take a more manageable basis, defined recursively as follows:
\begin{align*}
B'_0 &=\{\,1\,\},\\
B'_1 &=Q,\\
B'_n &=\{\,(q-q_*) b_1\,|\,q\in Q\setminus\{q_*\},\,b_1\in B'_{n-1}\,\}\cup\{\,f\!f b_2\,|\,f\in F,\, b_2\in B'_{n-2}\,\}.
\end{align*}
Indeed, by construction, the sets~$B_n$ and~$B'_n$ have the same cardinalities, and by~\eqref{E:BnB'n} they have the same linear span. This basis can be extended to the case of general (non-semi-free) permutation racks: we only need to replace the elements~\hbox{$f\!f b_2$} with the more involved trace construction~$f_{\tr}(b_2)$ from Remark~\ref{rmk:fixed}. In the proof we preferred not to do so, and collapsed all finite orbits to fixed
points instead, keeping the main line of thought as non-computational as possible.
\end{remark}

\begin{corollary}\label{cor:Poincare}
Let~$(X,\phi)$ be a permutation rack with~$r_\fin$ of the~$r$ orbits finite. Then all its homology groups~$\HR_n(X,\phi)$ are free abelian, and the Poincar\'e series of the rack homology is
\[
\sum_{n=0}^\infty\rank\HR_n(X,\phi)T^n
=\frac{1+\rmT}{1-(r-1)\rmT-r_\fin\rmT^2}.
\]
\end{corollary}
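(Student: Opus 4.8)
The plan is to deduce the corollary directly from the degeneracy statement already established in Theorem~\ref{thm:degeneracy} together with the computation of the $\rmE^2$ page. First I would reduce to the case of finitely many orbits, exactly as in Remark~\ref{rem:infinite} and at the start of the proof of Theorem~\ref{thm:degeneracy}: any fixed homology group, and hence any coefficient of the Poincar\'e series, involves only finitely many orbits, and rack homology is compatible with the relevant unions, so it is enough to treat a permutation rack with finitely many orbits. Under that assumption the $\rmE^2$ page of the spectral sequence of Theorem~\ref{thm:ss} consists of finitely generated free abelian groups in every bidegree, by Proposition~\ref{prop:Borel_homology} and the K\"unneth theorem, as was observed in the proof of Theorem~\ref{thm:ss}.

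Next I would invoke Theorem~\ref{thm:degeneracy} to conclude that $\rmE^\infty\cong\rmE^2$. Since every group on the $\rmE^2=\rmE^\infty$ page is free abelian, the filtration on the abutment has no extension problems, so $\HR_n(X,\phi)$ is (non-canonically) isomorphic to $\bigoplus_{p+q=n}\rmE^2_{p,q}$. In particular $\HR_n(X,\phi)$ is free abelian, hence torsion free, which is the first assertion of the corollary.

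Finally, for the Poincar\'e series I would reuse the generating-function computation carried out in the proof of Theorem~\ref{thm:finite}. Writing $f(\rmT)$ for the Poincar\'e series of $\overline{\rmH}_\bullet(X\ho\phi)$, the $q$-th row of the $\rmE^2$ page has Poincar\'e series $1$ for $q=0$ and $f(\rmT)^q+f(\rmT)^{q-1}$ for $q\geqslant 1$, so summing over $q$ gives $\frac{1+\rmT}{1-f(\rmT)\rmT}$ for the total-degree Poincar\'e series of $\rmE^2$, and therefore, by the previous paragraph, of $\HR_\bullet(X,\phi)$. By Proposition~\ref{prop:Borel_homology}, in the general case $\overline{\rmH}_\bullet(X\ho\phi)$ has rank $r-1$ in degree $0$ and rank $r_\fin$ in degree $1$, so $f(\rmT)=(r-1)+r_\fin\rmT$, and substituting this in yields the claimed expression $\frac{1+\rmT}{1-(r-1)\rmT-r_\fin\rmT^2}$. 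Since the genuine work — the degeneracy — is done in Theorem~\ref{thm:degeneracy}, no serious obstacle remains here; the only points needing a little care are the passage to finitely many orbits and the observation that $f(\rmT)$ takes the stated form when both finite and infinite orbits occur, both of which are immediate from Proposition~\ref{prop:Borel_homology}.
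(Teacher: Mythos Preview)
Your proposal is correct and follows essentially the same route as the paper: the corollary is not given a separate proof there but is read off directly from the proof of Theorem~\ref{thm:degeneracy}, which computes the Poincar\'e series of the $\rmE^2$ page as $\frac{1+\rmT}{1-(r-1)\rmT-r_\fin\rmT^2}$ via $f(\rmT)=(r-1)+r_\fin\rmT$ and then uses degeneracy and the freeness of $\rmE^2$ to conclude. Your explicit reduction to finitely many orbits is a harmless extra precaution (the Poincar\'e series formula already presupposes $r$ finite), and otherwise your outline matches the paper's argument step for step.
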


The reader can easily check that this formula specializes to the ones given earlier in the cases when~$(X,\phi)$ is free ($r_\fin=0$) or all orbits are finite~($r_\fin=r$).

\begin{remark}
Standard manipulations transform the above formula into an equivalent one, which can be more suitable in practice:
\[
\sum_{n=0}^\infty\rank\HR_n(X,\phi)\rmT^n
=\sum_{n=0}^\infty(r-1)^n\left(\frac{1+\rmT}\rmT\right)\left(\displaystyle{\frac{\rmT}{1-r_\fin\rmT^2}}\right)^{n+1}.
\]
The recursive formulas~\eqref{E:Betti} and~\eqref{E:Betti2} are probably even more convenient for computations.
\end{remark}


\section*{Acknowledgements}

The authors are thankful to the referee for constructive remarks leading to a better readability of the paper.


\bibliographystyle{alpha}
\bibliography{permutations}


\vfill

\parbox{\linewidth}{%
Normandie Univ, UNICAEN, CNRS, LMNO, 14000 Caen, FRANCE\\
\href{mailto:lebed@unicaen.fr}{lebed@unicaen.fr}
}

\parbox{\linewidth}{%
Department of Mathematical Sciences, NTNU Norwegian University of Science and Technology, 7491 Trondheim, NORWAY\\
\href{mailto:markus.szymik@ntnu.no}{markus.szymik@ntnu.no}
}

\parbox{\linewidth}{School of Mathematics and Statistics, The University of Sheffield, Hicks Building, Hounsfield Road, Sheffield S3 7RH, UNITED KINGDOM\\
\href{mailto:m.szymik@sheffield.ac.uk}{m.szymik@sheffield.ac.uk}
}


\end{document}